\newtheorem{theorem}{Theorem}[section]
\newtheorem{proposition}[theorem]{Proposition}
\newtheorem{corollary}[theorem]{Corollary}
\newtheorem{remark}[theorem]{Remark}
\newtheorem{lemma}[theorem]{Lemma}
\newtheorem{problem}[theorem]{Problem}
\newtheorem{conjecture}[theorem]{Conjecture}
\numberwithin{equation}{section}
\begin{document}
\title[Unimodular multilinear forms on sequence spaces with small norms]{Asymptotic estimates for unimodular multilinear forms with small norms on sequence spaces}

\author[Albuquerque]{Nacib Gurgel Albuquerque}
\address{Departamento de Matem\'{a}tica, \newline \indent
Universidade Federal da Para\'{i}ba,  \newline \indent
58.051-900 - Jo\~{a}o Pessoa, Brazil.}
\email{ngalbqrq@gmail and ngalbuquerque@mat.ufpb.br}

\author[Rezende]{Lisiane Rezende}
\address{Departamento de Matem\'{a}tica, \newline\indent
Universidade Federal da Para\'{i}ba, \newline\indent
58.051-900 - Jo\~{a}o Pessoa, Brazil.}
\email{lirezendestos@gmail.com}

\subjclass[2010]{46G25, 47H60}
\keywords{Summing operators, multilinear operators, anisotropic Hardy--Littlewood inequality, Kahane--Salem--Zygmund inequality}
\thanks{N. G. Albuquerque is supported by CNPq 409938/2016-5 and Grant 2019/0014 Para\'iba State Research Foundation (FAPESQ). L. Rezende is supported by CAPES}

%===================
%    ABSTRACT
%===================
\begin{abstract}
The existence of unimodular forms with small norms on sequence spaces is crucial in a variety of problems in modern analysis. We prove that the infimum of $\left\Vert A\right\Vert $ over all unimodular $d$-linear (complex or real) forms $A$ on $\ell_{p_1}^{n_{1}} \times \cdots \times \ell_{p_d}^{n_{d}}$, for all $p_1,\dots,p_d \in [2, \infty]$ and all positive integers $n_1,\dots,n_d$, behaves (asymptotically) as $\left(n_{1}^{1/2} + \cdots + n_{d}^{1/2}\right) \prod_{j=1}^{d}n_{j}^{\frac{1}{2} - \frac{1}{p_j}}$. Applications to the theory of the multilinear Hardy--Littlewood inequality are also presented.
\end{abstract}

\maketitle

%=============================
%  SECTION 1:  INTRODUCTION
%=============================
\section{Introduction}

The Kahane--Salem--Zygmund inequality provides multilinear forms with unimodular coefficients and small norms on classical sequence spaces. It is nowadays a fundamental tool in different fields of modern Analysis with a broad range of applications involving, for instance, Bohr's radius, the Bohnenblust--Hille and Hardy--Littlewood multilinear inequalities  (see, e.g., \cite{ABPS_HL,Bayart,BPS_adv,BK}). For $p_1,\dots,p_d\geq2$ and $\mathbb{K}$ the real or complex scalar field, it asserts that there exists a $d$-linear form $A:\ell_{p_1}^{n} \times\cdots\times \ell_{p_d}^{n} \to \mathbb{K}$ of the form 
\begin{equation*}
A\left( x^{1},\ldots,x^{d}\right) =\sum_{j_{1},\ldots,j_{d}=1}^{n} 
\varepsilon_{\mathbf{j}}x_{j_{1}}^{1} \cdots x_{j_{d}}^{d},
\end{equation*}
with $\varepsilon_{\mathbf{j}}\in\{-1,1\},\, \mathbf{j}:=(j_1,\dots,j_d)$, such that 
\begin{equation*}
\|A\| \leq C_{d} \cdot n^{\frac{d+1}{2} - \sum_{j=1}^d\frac{1}{p_j}}, 
\end{equation*}
and $C_d:= (d!)^{1-\max\left\{ \frac{1}{2}, \frac{1}{\max\{p_{1}, \ldots, p_{d}\}} \right\}} \sqrt{32d\log(6d)}$.

On the other hand, it is possible to verify that there is a constant $B_{d}>0$ such that 
\begin{equation*}
\left\Vert A\right\Vert \geq B_{d} \cdot n^{ \frac{d+1}{2} - \sum_{j=1}^d%
\frac{1}{p_j} } 
\end{equation*}
for all unimodular $d$-linear forms $A:\ell_{p_1}^{n} \times\cdots\times \ell_{p_d}^{n} \to \mathbb{K}$ with $p_1,\dots,p_d \geq2$. So we conclude that the exponent is sharp and we can summarize this sharpness by writing 
\begin{equation}
0<B_{d} \leq \inf \frac{\|A\|}{ n^{\frac{d+1}{2} - \sum_{j=1}^d\frac{1}{p_j}} }
\leq C_{d}<\infty,  \label{1s}
\end{equation}
where $p_1,\dots,p_d \geq2$ and the infimum is calculated over all unimodular $d$-linear forms $A:\ell_{p_1}^{n} \times\cdots\times \ell_{p_d}^{n} \to \mathbb{K}$. From (\ref{1s}) it is obvious that 
\begin{align}  \label{inf.}
0<B_{d}\leq\inf\frac{\left\Vert A\right\Vert }{n^{s}}
\leq C_{d}<\infty \Leftrightarrow s =\frac{d+1}{2} - \sum_{j=1}^d\frac{1}{p_j}.
\end{align}

\begin{problem}
\label{prob} What about the asymptotic behavior of $\inf \left\Vert A\right\Vert $, where the infimum is estimated over all unimodular $d$-linear forms $A:\ell_{p_1}^{n_{1}} \times\cdots\times \ell_{p_d}^{n_{d}} \to \mathbb{K}$?
\end{problem}

The main goal of this paper is to present a solution to this problem. The following simple lemmata shows that the answer to Problem \ref{prob} is not so immediate.

\begin{lemma}
If there are $s_{1},\dots,s_{d} \in \mathbb{R}$ such that there exist $C_{d},B_{d} \in (0,\infty)$ such that
\begin{equation*}
B_{d} \leq \inf \frac{\left\| A \right\|} {n_{1}^{s_{1}} \cdots n_{d}^{s_{d}}%
} \leq C_{d} 
\end{equation*}
for all $n_{1},\dots,n_{d}\geq1$, where the infimum is estimated over all unimodular $d$-linear forms $A:\ell_{p_1}^{n_{1}} \times\cdots\times \ell_{p_d}^{n_{d}} \to \mathbb{K}$, then $s_{1},\dots,s_{d}$ are unique.
\end{lemma}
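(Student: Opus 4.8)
The plan is to reformulate the statement as a property of the single ``infimum function''
\[
I(n_{1},\dots,n_{d}) := \inf_{A}\left\Vert A\right\Vert,
\]
where the infimum runs over all unimodular $d$-linear forms $A:\ell_{p_1}^{n_{1}}\times\cdots\times\ell_{p_d}^{n_{d}}\to\mathbb{K}$, and to show that this function fixes the exponent vector. First I would suppose, for contradiction, that two exponent vectors $(s_{1},\dots,s_{d})$ and $(t_{1},\dots,t_{d})$ of positive numbers both satisfy the hypothesis, say with constants $0<B_{d}\leq C_{d}<\infty$ for the first and $0<B_{d}'\leq C_{d}'<\infty$ for the second. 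Unwinding the two-sided estimates, this yields, for every choice of $n_{1},\dots,n_{d}\geq1$,
\[
B_{d}\,n_{1}^{s_{1}}\cdots n_{d}^{s_{d}}\leq I(n_{1},\dots,n_{d})\leq C_{d}'\,n_{1}^{t_{1}}\cdots n_{d}^{t_{d}},
\]
together with the symmetric chain obtained by interchanging the two vectors.

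Next I would eliminate $I$ by dividing the two chains against each other. This gives a purely numerical inequality, valid for all $n_{1},\dots,n_{d}\geq1$ simultaneously:
\[
\frac{B_{d}}{C_{d}'}\leq\prod_{j=1}^{d}n_{j}^{\,t_{j}-s_{j}}\leq\frac{C_{d}}{B_{d}'}.
\]
Thus the product $\prod_{j}n_{j}^{t_{j}-s_{j}}$ is trapped between two strictly positive, $\mathbf{n}$-independent constants. To disentangle the coordinates, fix an index $k\in\{1,\dots,d\}$ and specialize by setting $n_{j}=1$ for every $j\neq k$ while leaving $n_{k}$ free; the product then collapses to the single factor, so that
\[
\frac{B_{d}}{C_{d}'}\leq n_{k}^{\,t_{k}-s_{k}}\leq\frac{C_{d}}{B_{d}'}\qquad\text{for all }n_{k}\geq1.
\]

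Finally I would let $n_{k}\to\infty$: if $t_{k}-s_{k}>0$ the middle term diverges, contradicting the upper bound, while if $t_{k}-s_{k}<0$ it tends to $0$, contradicting the strictly positive lower bound. Hence $t_{k}=s_{k}$, and since $k$ was arbitrary the two exponent vectors coincide, proving uniqueness. I do not expect a genuine technical obstacle here; the only point requiring care is the specialization step, and the essential structural feature being exploited is that the hypothesis quantifies over all tuples $(n_{1},\dots,n_{d})$ with the coordinates varying \emph{independently}. It is precisely this freedom---rather than, say, only the diagonal $n_{1}=\cdots=n_{d}$---that allows each coordinate to be isolated and each exponent to be pinned down separately.
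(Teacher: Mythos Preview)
Your argument is correct and follows essentially the same approach as the paper: the paper's one-line proof simply says to set $n_{2}=\cdots=n_{d}=1$ to pin down $s_{1}$, and so on, which is exactly your specialization step written out in full detail.
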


\begin{proof}
To prove that $s_{1}$ is unique it suffices to consider $n_{2}=\cdots=n_{d}=1 $ and so on.
\end{proof}

\begin{lemma}
There are no $s_{1},\dots,s_{d} \in \mathbb{R}$ such that there exist $C_{d},B_{d}\in(0,\infty)$ such that 
\begin{equation*}
B_{d} \leq \inf \frac{\|A\|} {n_{1}^{s_{1}}\cdots n_{d}^{s_{d}}} \leq C_{d} 
\end{equation*}
for all $n_{1},\dots,n_{d}\geq1$, where the infimum is estimated over all
unimodular $d$-linear forms $A:\ell_{p_1}^{n_{1}} \times\cdots\times \ell_{p_d}^{n_{d}} \to \mathbb{K}$.
\end{lemma}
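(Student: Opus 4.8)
The plan is to argue by contradiction, exploiting the rigidity forced by the preceding uniqueness lemma together with the classical estimate \eqref{1s}. Suppose such exponents $s_1,\dots,s_d>0$ and constants $0<B_d\le C_d<\infty$ existed. I would first determine what the exponents are \emph{forced} to be by probing the two-sided bound along the coordinate directions, exactly as in the proof of the uniqueness lemma, and then derive a contradiction by probing along the diagonal $n_1=\cdots=n_d=n$.

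First I would compute the infimum in the degenerate case $n_2=\cdots=n_d=1$. Here every unimodular $d$-linear form $A:\ell_{p_1}^{n_1}\times\mathbb{K}\times\cdots\times\mathbb{K}\to\mathbb{K}$ has the shape $A(x^1,\dots,x^d)=\bigl(\sum_{j=1}^{n_1}\varepsilon_j x_j^1\bigr)x_1^2\cdots x_1^d$, so its norm factors as a product of suprema over the unit balls. The last $d-1$ factors contribute $1$, while the first is the norm of the functional $x^1\mapsto\sum_j\varepsilon_j x_j^1$ on $\ell_{p_1}^{n_1}$, namely the dual norm, which for the conjugate exponent $p_1'$ (so that $1/p_1+1/p_1'=1$) equals $\bigl(\sum_{j=1}^{n_1}|\varepsilon_j|^{p_1'}\bigr)^{1/p_1'}=n_1^{1/p_1'}=n_1^{1-1/p_1}$, independently of the choice of signs. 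Hence the infimum equals $n_1^{1-1/p_1}$, and the two-sided bound forces $n_1^{1-1/p_1-s_1}$ to remain between $B_d$ and $C_d$ for all $n_1\ge1$; this is possible only if $s_1=1-1/p_1$. Permuting the roles of the coordinates yields $s_k=1-1/p_k$ for every $k$.

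Finally I would specialize to the diagonal $n_1=\cdots=n_d=n$. On one hand $\prod_{j=1}^d n^{s_j}=n^{\sum_{j=1}^d(1-1/p_j)}=n^{d-\sum_{j=1}^d 1/p_j}$. On the other hand, the classical Kahane--Salem--Zygmund estimate \eqref{1s} gives that $\inf\|A\|$ is comparable to $n^{\frac{d+1}{2}-\sum_{j=1}^d 1/p_j}$. Dividing, the ratio is comparable to $n^{\frac{d+1}{2}-d}=n^{(1-d)/2}$, which tends to $0$ as $n\to\infty$ whenever $d\ge2$. This contradicts the required lower bound $B_d>0$, completing the proof.

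The main obstacle is conceptual rather than computational: one must recognize that the monomial ansatz $n_1^{s_1}\cdots n_d^{s_d}$ is pinned down completely by its behavior on the coordinate axes, yet this forced choice is incompatible with the genuinely multilinear growth captured on the diagonal — reflecting the fact that the true answer contains the \emph{sum} $n_1^{1/2}+\cdots+n_d^{1/2}$, which no single monomial can reproduce. I would also note that the statement is intended for $d\ge2$, since for $d=1$ the monomial $n_1^{1-1/p_1}$ is in fact exact.
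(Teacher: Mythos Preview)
Your proof is correct and follows essentially the same route as the paper's: pin down each $s_k=1-1/p_k$ by setting the other $n_j$ equal to $1$, then reach a contradiction on the diagonal via \eqref{1s}. Your write-up is in fact a bit more explicit (you spell out the dual-norm computation and the exponent $(1-d)/2$ in the contradiction, and you rightly flag that $d\ge2$ is needed), but the argument is the same.
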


\begin{proof}
Considering $n_{2}=\cdots=n_{d}=1$ we conclude that $s_{1}\geq1-\frac{1}{p_1}$. In fact, every unimodular form is of the type 
\begin{equation*}
A(x^{1},\dots,x^{d}) = \left( \sum_{j=1}^{n_{1}} \pm x_{j}^{1}\right)
x_{1}^{2}\cdots x_{1}^{d} 
\end{equation*}
and 
\begin{equation*}
\|A\|=n_{1}^{1-\frac{1}{p_1}}. 
\end{equation*}
By the previous lemma, we have $s_{1}=1-\frac{1}{p_1}$. Analogously, 
\begin{equation*}
s_{2}=1-\frac{1}{p_2}, \ldots, s_{d}=1-\frac{1}{p_d}. 
\end{equation*}
Now, considering $n_{1}=\cdots=n_{d}=n$, we would have 
\begin{equation*}
B_{d} \leq \inf \frac{\left\Vert A\right\Vert }{n^{d-\sum_{j=1}^d\frac{1}{p_j%
}}} \leq C_{d} 
\end{equation*}
and this contradicts \eqref{inf.}.
\end{proof}

\bigskip

An answer to Problem \ref{prob} is a function $f(n_{1},\dots,n_{d})$ so that there exist $C_{d},B_{d}\in(0,\infty)$ such that 
\begin{equation*}
B_{d}\leq\inf\frac{\left\Vert A\right\Vert }{f(n_{1},\dots,n_{d})}\leq C_{d} 
\end{equation*}
for all $n_{1},\dots,n_{d}\geq1$, where the infimum is estimated over all unimodular $d$-linear forms $A:\ell_{p_1}^{n_{1}} \times\cdots\times \ell_{p_d}^{n_{d}} \to \mathbb{K}$. This function $f$ describes the asymptotic growth of such multilinear forms $A$ with small norms. We shall show that (see Theorem \ref{op_exp}), for $p_1,\dots,p_d \geq 2$, $f$ can be chosen as 
\begin{equation*}
\left( n_{1}^{1/2}+\cdots+n_{d}^{1/2} \right) \prod_{j=1}^{d} n_{j}^{\frac{1%
}{2}-\frac{1}{p_j}}. 
\end{equation*}

This paper is organized as follows: in Section \ref{sec-ksz}, we revisit the Kahane--Salem--Zygmund multilinear inequality. The original proof of the inequality (see \cite{Boas2000} and \cite[Lemma 3.1]{ABPS_HL}) is adapted and, by using interpolation and duality, the norm estimate is improved when dealing with $p \in [1,2]$. In Section \ref{sec-ksz-opt}, it is proved that the Kahane--Salem--Zygmund inequality we obtained (Theorem \ref{KSZ_gen}) is sharp when $p_{1},\dots,p_{d}\in\lbrack2,\infty]$. In Section \ref{sec-app} some applications are presented.

Recall that $\ell_p^n$ stands for $\mathbb{K}^n$ with the $\ell_p$-norm, $p \in [1,\infty]$. As usual, the open ball with center $x$ and radius $r$ is denoted by $B(x,r)$, its closure by $\overline{B}(x,r)$ and the center and radius are omitted when we deal with the open and closed unit balls. For the sake of clarity we fix some useful notation: for $p_1,\dots,p_d \in (0,\infty)$, we define $\mathbf{p}:=(p_1,\dots,p_d), \ \left| \frac{1}{\mathbf{p}}\right| := \frac{1}{p_1} + \dots + \frac{1}{p_d}$. The cardinal of a set $\mathcal{I}$ is denoted by $\text{card}\, \mathcal{I}$. We also use the usual multi-index notation $\mathbf{j}:=(j_1,\dots,j_d) \in \mathbb{N}^d$ and $q^\prime$ denotes the conjugate of $q \in [1,\infty]$, i.e., $\frac1q+\frac1{q^\prime}=1$.

%=====================
%  SECTION 2:  KSZ
%=====================
\section{Kahane--Salem--Zygmund inequality revisited} \label{sec-ksz}

The main result in this section is Theorem \ref{KSZ_gen}, a version of the multilinear Kahane--Salem--Zygmund inequality on the space $\ell_{p_1}^{n_1}\times \cdots \times \ell_{p_d}^{n_d}$ and with sup norm refined when dealing with some $p_k$ between $1$ and $2$. Two basic tools are needed: an useful upper bound for the probability of a non-negative function to be greater than or equal to some positive constant; (see \cite[Lemma 20.1]{Bauer}); and a covering argument (see \cite[p. 333]{Boas2000}). The techniques follow along the lines of \cite{Boas2000}, we present the sketch of the proof for the sake of completeness.

\begin{lemma}[Chebyshev--Markov Inequality]
\label{cheby} Let $\left(\Omega,\Sigma,\mu\right)$ be a measure space, $f$ be measurable function on $\Omega$, and $g: \mathbb{R }\to \mathbb{R}$ be a non-negative function such that it is non-decreasing on the range of $f$ and $g(t)>0$, if $t>0$. Then, for every positive real number $\alpha$, 
\begin{equation*}
\mu \left(\left[ f \geq \alpha \right]\right) \leq \frac{1}{g(\alpha)}
\int_\Omega g \circ f \,d\mu.
\end{equation*}
In particular, 
\begin{equation*}
\mu \left(\left[ |f| \geq \alpha \right]\right) \leq \frac{1}{\alpha^p}
\int_\Omega |f|^p \, d\mu
\end{equation*}
holds for every real $p>0$.
\end{lemma}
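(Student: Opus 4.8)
The plan is to work directly with the super-level set $A := [f \geq \alpha]$, using the nonnegativity of $g$ to discard the contribution of its complement and the monotonicity of $g$ to bound the integrand from below on $A$. First I would note that since $g$ is nonnegative, $g \circ f \geq 0$ throughout $\Omega$, so restricting the integral to $A$ only decreases it:
\[
\int_\Omega g \circ f \, d\mu \;\geq\; \int_A g \circ f \, d\mu.
\]

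Next, for every $\omega \in A$ one has $f(\omega) \geq \alpha$, and because $g$ is nondecreasing on the range of $f$ this yields the pointwise bound $g(f(\omega)) \geq g(\alpha)$ on $A$. Integrating this pointwise inequality gives
\[
\int_A g \circ f \, d\mu \;\geq\; g(\alpha)\,\mu(A).
\]
Since $\alpha > 0$, the hypothesis guarantees $g(\alpha) > 0$, so dividing by $g(\alpha)$ and combining the two displays produces the asserted estimate $\mu(A) \leq g(\alpha)^{-1}\int_\Omega g \circ f \, d\mu$. For the particular case I would simply apply the general inequality with $|f|$ in place of $f$ and with the choice $g(t) = |t|^p$ for the fixed exponent $p > 0$: this $g$ is nonnegative, nondecreasing on $[0,\infty) \supseteq \mathrm{range}(|f|)$, and strictly positive for $t > 0$, so the hypotheses hold; since $g(\alpha) = \alpha^p$ and $g \circ |f| = |f|^p$, the general statement specializes exactly to $\mu([|f| \geq \alpha]) \leq \alpha^{-p}\int_\Omega |f|^p \, d\mu$.

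There is no real difficulty here beyond bookkeeping; the only point meriting care is the pointwise comparison $g(f(\omega)) \geq g(\alpha)$ on $A$, which is where the monotonicity hypothesis is consumed. One must read that hypothesis as controlling $g$ across the threshold $\alpha$, so that values of $f$ at or above $\alpha$ are sent by $g$ to values at or above $g(\alpha)$; everything else is a routine application of the monotonicity of the Lebesgue integral.
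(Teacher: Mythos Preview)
Your argument is correct and is the standard proof; the paper itself does not prove this lemma but simply states it with a reference to \cite[Lemma 20.1]{Bauer}, so there is nothing to compare against.
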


\begin{lemma}[Covering argument]
\label{lemma-cov} Let $r$ be a positive real number. Then the open unit ball  $B$ of $\ell_p^n$ can be covered by a collection of open $\ell_p^n$-balls of radius $r$, with the number of balls in the collection not exceeding $\left(1+2r^{-1}\right)^{2n}$, and the centers of the balls lying in the closed unit ball $\overline{B}$ of $\ell_p^n$.
\end{lemma}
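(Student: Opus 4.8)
The plan is to prove a covering lemma: the open unit ball of $\ell_p^n$ over the scalar field $\mathbb{K}$ can be covered by at most $(1+2r^{-1})^{2n}$ balls of radius $r$ centered in the closed unit ball. The key idea is a standard volumetric (packing/covering) argument. First I would reduce to the real case by noting that $\ell_p^n$ over $\mathbb{C}$ is, as a metric space, the same as a $2n$-dimensional real space: identifying $\mathbb{C}^n$ with $\mathbb{R}^{2n}$ turns the complex unit ball into a convex body $K \subset \mathbb{R}^{2n}$, and in the real case we simply take $K$ to be the $\ell_p^n$ ball in $\mathbb{R}^n$, which gives the exponent $n$ rather than $2n$; treating both cases uniformly with ambient real dimension $N$ (either $n$ or $2n$) lets me write the bound as $(1+2r^{-1})^N$.

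The heart of the argument is the following comparison between covering numbers and volumes for a symmetric convex body $K$ in $\mathbb{R}^N$. I would choose a maximal $r$-separated set $\{x_1,\dots,x_M\}$ inside $\overline{B}=\overline{K}$, meaning the mutual distances $\|x_i-x_j\|$ (in the relevant norm) are all at least $r$. By maximality every point of the open ball $B$ lies within distance $r$ of some $x_i$, so the balls $B(x_i,r)$ cover $B$; this is exactly the covering we want, and the centers lie in $\overline{B}$ by construction. It remains to bound $M$. The essential step is that the smaller balls $B(x_i, r/2)$ are pairwise disjoint and are all contained in the inflated ball $\overline{B}(0, 1+r/2)$, since each $x_i$ has norm at most $1$. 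Comparing Lebesgue volumes then yields
\[
M \cdot \operatorname{vol}\!\left(B(0,\tfrac{r}{2})\right) \le \operatorname{vol}\!\left(\overline{B}(0, 1+\tfrac{r}{2})\right).
\]
Using the scaling $\operatorname{vol}(B(0,\rho)) = \rho^N \operatorname{vol}(B(0,1))$ for any norm ball in $\mathbb{R}^N$, the volume factors cancel and I obtain
\[
M \le \left(\frac{1+r/2}{r/2}\right)^N = \left(1 + \frac{2}{r}\right)^N = \left(1 + 2r^{-1}\right)^N.
\]

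Plugging in $N = n$ for the real field and $N = 2n$ for the complex field gives the stated bound $(1+2r^{-1})^{2n}$, which is uniform in both cases (the real case is even better). The main obstacle, and the only place requiring care, is the disjointness-and-containment step: I must verify that distinct centers of an $r$-separated family force the half-radius balls to be disjoint (which uses the triangle inequality precisely at separation $r$) and that these half-radius balls stay inside the $(1+r/2)$-ball. Neither is difficult, but both rely on the homogeneity and translation-invariance of the norm metric, so it is worth stating explicitly that the volume scaling holds for an arbitrary norm on $\mathbb{R}^N$, not merely the Euclidean one; once that is in hand the computation is routine and dimension-only, which is exactly why the bound depends on $n$ (through $N$) and $r$ but not on $p$.
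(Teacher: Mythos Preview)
Your argument is correct and is precisely the standard volumetric packing/covering argument. The paper does not give its own proof of this lemma; it merely states the result and cites Boas \cite[p.~333]{Boas2000}, where the same reasoning (maximal $r$-separated set in the closed ball, disjointness of the half-radius balls, volume comparison in the ambient real $N$-dimensional space with $N=2n$ to accommodate $\mathbb{K}=\mathbb{C}$) is carried out.
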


The version of the Kahane--Salem--Zygmund inequality on $%
\ell_{p_{1}}^{n_{1}} \times \cdots \times\ell_{p_{d}}^{n_{d}}$ is presented
below.

\begin{proposition}
\label{KSZ} Let $d,n_{1}, \ldots, n_{d} \geq1$ be positive integers and $%
p_{1},\ldots,p_{d} \in\left[ 1,\infty\right]$. Then there exist signs $%
\varepsilon_{\mathbf{j}} = \pm1$ and a $d$-linear map $A:
\ell_{p_{1}}^{n_{1}} \times\cdots\times\ell_{p_{d}}^{n_{d}} \to\mathbb{K}$
of the form 
\begin{align*}
A\left( z^{1},\ldots,z^{d}\right) =\sum_{j_{1}=1}^{n_{1}} \ldots\sum
_{j_{d}=1}^{n_{d}}\varepsilon_{\mathbf{j}} z^{1}_{j_{1}}\cdots z^{d}_{j_{d}},
\end{align*}
such that 
\begin{equation*}
\|A\| \leq C_{d} \cdot \prod_{k=1}^{d}{n_{k}}^{\max\left\{\frac12 - \frac{1}{%
p_{k}}, 0\right\} } \cdot\left( \sum_{k=1}^{d}n_{k}\right) ^{\frac{1}{2}}, 
\end{equation*}
with $C_{d} = 8 (d!) ^{1-\max\left\{ \frac{1}{2}, \frac{1}{p} \right\} } 
\sqrt{\log(1+4d)}$ and $p:= \max\{p_{1},\ldots,p_{d}\}$.
\end{proposition}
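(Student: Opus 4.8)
The plan is to realize $A$ by the probabilistic method: take the $\varepsilon_{\mathbf{j}}$ to be independent Rademacher signs on a probability space $(\Omega,\Sigma,\mu)$ and prove that, with $\mu$-probability strictly positive, the resulting form obeys the asserted estimate; a single admissible choice of signs then exists. The two quoted lemmata are exactly the tools for this: Lemma \ref{cheby} controls the tail at a fixed point, and Lemma \ref{lemma-cov} reduces the supremum over the (continuum) unit balls to a maximum over a finite net.

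First I would fix $z=(z^{1},\dots,z^{d})$ with each $z^{k}$ in the unit ball of $\ell_{p_{k}}^{n_{k}}$ and analyze the scalar variable $A(z)=\sum_{\mathbf{j}}\varepsilon_{\mathbf{j}}z^{1}_{j_{1}}\cdots z^{d}_{j_{d}}$. Orthogonality of the signs gives $\int_{\Omega}|A(z)|^{2}\,d\mu=\sum_{\mathbf{j}}\prod_{k}|z^{k}_{j_{k}}|^{2}=\prod_{k}\|z^{k}\|_{2}^{2}$, and H\"older's inequality bounds $\|z^{k}\|_{2}\leq n_{k}^{\max(1/2-1/p_{k},\,0)}\|z^{k}\|_{p_{k}}$ (trivially so when $p_{k}\leq 2$). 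Applying Lemma \ref{cheby} with an exponential (or high-power) $g$ and the corresponding moment estimate for the Rademacher sum then yields a sub-Gaussian tail $\mu([\,|A(z)|\geq\alpha\,])\leq 2\exp(-c\,\alpha^{2}/\sigma^{2})$, where I set $\sigma^{2}=\prod_{k}n_{k}^{2\max(1/2-1/p_{k},0)}$ as the uniform upper bound on the variance.

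Next I would discretize. By Lemma \ref{lemma-cov} each unit ball of $\ell_{p_{k}}^{n_{k}}$ carries an $r$-net of at most $(1+2/r)^{2n_{k}}$ points in $\overline B$, so the product net $\mathcal N$ has at most $(1+2/r)^{2\sum_{k}n_{k}}$ points. Multilinearity, via a telescoping replacement of one coordinate at a time, gives $|A(z)-A(w)|\leq d\,r\,\|A\|$ whenever $w\in\mathcal N$ is within $r$ of $z$ in each coordinate; hence $\|A\|\leq(1-dr)^{-1}\max_{\mathcal N}|A|$. Choosing $r=1/(2d)$ turns this into $\|A\|\leq 2\max_{\mathcal N}|A|$ and makes the net cardinality $(1+4d)^{2\sum_{k}n_{k}}$, which is the source of the $\log(1+4d)$. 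A union bound gives $\mu(\max_{\mathcal N}|A|\geq\alpha)\leq(1+4d)^{2\sum_{k}n_{k}}\cdot 2\exp(-c\alpha^{2}/\sigma^{2})$, and the least $\alpha$ making the right-hand side $<1$ satisfies $\alpha\lesssim\sigma\,(\sum_{k}n_{k})^{1/2}\sqrt{\log(1+4d)}$. Together with $\|A\|\leq 2\alpha$ this produces the asserted shape $\prod_{k}n_{k}^{\max(1/2-1/p_{k},0)}(\sum_{k}n_{k})^{1/2}$.

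The remaining and, I expect, most delicate point is the precise $p$-dependence of the constant, namely the exponent $1-\max(1/2,1/p)$ carried by $d!$. This is the refinement for $p_{k}\in[1,2]$ advertised in the introduction, and I would obtain it not probabilistically but by duality and interpolation between two coordinate endpoints: the $\ell_{2}$ endpoint treated above, whose careful Boas-type constant tracking I record as contributing the factor $(d!)^{1/2}$, and the $\ell_{1}$ endpoint, where the extreme points of the ball are the signed basis vectors, the supremum collapses to $\max_{\mathbf j}|\varepsilon_{\mathbf j}|=1$, and no factorial survives. Complex interpolation of the $\ell_{p_{k}}^{n_{k}}$ scales (the interpolation parameter being $\theta=2-2/p$ between $\ell_{1}$ and $\ell_{2}$) then yields the intermediate exponent $(d!)^{\theta/2}=(d!)^{1-1/p}$ for $1\leq p\leq 2$, while for $p\geq 2$ the constant saturates at $(d!)^{1/2}$. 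The difficulty is to run this interpolation in step with the net/union-bound balance of the previous paragraph and to keep every constant uniform in $n_{1},\dots,n_{d}$, so that the single expression $C_{d}=8\,(d!)^{1-\max(1/2,1/p)}\sqrt{\log(1+4d)}$ governs all regimes simultaneously.
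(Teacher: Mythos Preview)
Your first two paragraphs already constitute a correct proof of the proposition, and in fact yield a \emph{stronger} bound than the one stated: with fully independent signs $\varepsilon_{\mathbf j}$ the sub-Gaussian parameter is exactly $\sigma^{2}=\prod_{k}\|z^{k}\|_{2}^{2}\le\prod_{k}n_{k}^{2\max(1/2-1/p_{k},0)}$, and after the net/union-bound step you obtain $\|A\|\le C\sqrt{\log(1+4d)}\,\prod_{k}n_{k}^{\max(1/2-1/p_{k},0)}(\sum_{k}n_{k})^{1/2}$ with $C$ an absolute constant. No $d!$ ever appears. Since $(d!)^{1-\max(1/2,1/p)}\ge 1$, this is at least as good as the asserted $C_{d}$.

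This is a genuine methodological difference from the paper. The paper follows Boas and assigns \emph{one} Rademacher sign per non-decreasing multi-index $\mathbf k$, forcing $\varepsilon_{\mathbf j}=\varepsilon_{\sigma(\mathbf j)}$ for every permutation $\sigma$; this symmetry is what makes the same choice of signs serve simultaneously for the polynomial corollary. In that setup the moment generating function factors over equivalence classes, and one must control $\sum_{\mathbf k}'\bigl(\sum_{\mathbf j\sim\mathbf k}z_{\mathbf j}\bigr)^{2}$. Applying H\"older inside each class (which has at most $d!$ terms) and then $\ell_q$-inclusions is precisely where the factor $(d!)^{2(1-1/m(p))}$ enters in the paper's Step~1. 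So the $p$-dependent exponent on $d!$ is produced \emph{inside} the exponential-moment estimate, not by any interpolation. Your independent-sign route sidesteps the whole issue at the cost of losing the symmetric (polynomial) structure, which the proposition does not require anyway.

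Consequently, your third paragraph is based on a misreading. There is no $(d!)^{1/2}$ lurking in your own $\ell_{2}$-endpoint calculation; you are importing it from Boas's symmetric argument, which is not the argument you ran. The duality/interpolation manoeuvre you sketch (between $\ell_{1}$ and $\ell_{2}$ in the domain variables) is neither needed for this proposition nor is it how the paper obtains the $d!$ exponent here; in the paper, interpolation appears only in the \emph{next} result (Theorem~\ref{KSZ_gen}), and it is used to refine the $n_{k}$-exponents via the parameter $\gamma$, not to adjust the power of $d!$. You may simply drop your last paragraph: the proposition follows from your first two, with a constant that even improves on the stated $C_{d}$.
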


\begin{proof}[Sketch of the proof]
We present the steps of the proof. The argument consists on a probabilistic estimate combined with the covering lemma.

\bigskip

\noindent\textsc{Step 1}. The first goal is to make an upper estimate on the
probability that the operator sum has large modulus. Fixed a multilinear map
like in the statement, it can be written in the form 
\begin{equation*}
A\left(t,z\right)=\sum_{\mathbf{k}}^{\prime} \left(r_\mathbf{k}(t) \sum_{\mathbf{j}\sim\mathbf{k}} z_\mathbf{j}\right),
\end{equation*}
where the following notation was used: $z=(z^1,\dots,z^d) \in \overline{B}_{p_1}^{n_1} \times \cdots \times \overline{B}_{p_d}^{n_d}$; $z_\mathbf{j}$ is a shorthand for the monomial $z^1_{j_1}\cdots z^d_{j_d}$; $t$ lies in the interval $I := \left[0,1\right]$; a prime symbol `` $^{\prime }$'' on a sum or product means that it is restricted to $d$-tuples of integers that are arranged in non-decreasing order; $\mathbf{j} \sim \mathbf{k}$ indicates that the $d$-tuples $\mathbf{j}$ and $\mathbf{k}$ are permutations of each other; and for each $d$-tuple of positive integers $\mathbf{k}:=(k_1,\dots,k_d)$ in non-decreasing order, we choose a different Rademacher function $r_\mathbf{k}$.

Let $\lambda$ be an arbitrary positive number (whose value we will specify later in terms of $d,\,n_1, \ldots, n_{d}$ and $p$). Invoking the independence and orthogonality properties of of the Rademacher functions, H\"older's inequality and $\ell_q$-norm inclusions, 
\begin{align*}
\int_I e^{\mbox{\footnotesize Re} \left[\lambda A\left(t,z\right)\right]%
}\,dt &\leq \prod_{\mathbf{k}}^{\prime} e^{\frac{1}{2} \left(\lambda \sum_{%
\mathbf{j}\sim\mathbf{k}} \mbox{\footnotesize Re}\, z_\mathbf{j} \right)^2}
\\
&= e^{\frac{1}{2} \lambda^2 \sum_{\mathbf{k}}^{\prime} \left(\sum_{\mathbf{j}%
\sim\mathbf{k}} \mbox{\scriptsize Re }z_\mathbf{j}\right)^2}. \\
&\leq \exp \left[\frac{1}{2} \lambda^2 \left(d!\right)^{2\left(1-\frac{1}{%
m(p)} \right)} \cdot {n_1}^{2\left(\frac{1}{2}-\frac{1}{M(p_1)}\right)}
\cdots {n_d}^{2\left(\frac{1}{2}-\frac{1}{M(p_d)}\right)} \right]
\end{align*}
where $p:=\max\{p_1,\ldots,p_d\}, \, m(t) := \min\{t,2\}$ and $M(t) :=
\max\{t,2\}$, for a real positive $t$.

\bigskip

\noindent\textsc{Step 2}. The second part of the proof uses Lemmas \ref{cheby} and \ref{lemma-cov} and a simple Lipschitz estimate for $A\left(t,z\right)$. Let $R$ be an arbitrary positive real number (whose value will be specified later in terms of $d,n_1,\dots,n_d$ and $p$). Using the previous upper bound and applying Lemma \ref{cheby} with $g(t):= e^t$ and 
\begin{equation*}
A:= \left\{ t \in I\,:\, \mbox{Re}\left[A\left(t,z\right)\right] \geq R
\right\} = \left\{t\in I \,:\, \lambda \mbox{Re}\left[A\left(t,z\right)%
\right] \geq \lambda R \right\}, 
\end{equation*}
we gain 
\begin{align*}
\mu\left(A\right) &\leq \exp \left[- R\lambda + \frac{1}{2} \lambda^2
\left(d!\right)^{2\left(1-\frac{1}{m(p)}\right)} \cdot {n_1}^{2\left(\frac{1%
}{2}-\frac{1}{M(p_1)}\right)}\cdots {n_d}^{2\left(\frac{1}{2}-\frac{1}{M(p_d)%
}\right)} \right].
\end{align*}
By symmetric reasoning, the probability that $\left|A\left(t,z\right)\right|$
exceeds $\sqrt{2}R$ is at most 
\begin{equation*}
4 \cdot \exp \left[- R\lambda + \frac{1}{2} \lambda^2
\left(d!\right)^{2\left(1-\frac{1}{m(p)}\right)} \cdot {n_1}^{2\left(\frac{1%
}{2}-\frac{1}{M(p_1)}\right)} \cdots {n_d}^{2\left(\frac{1}{2}-\frac{1}{%
M(p_d)}\right)} \right].
\end{equation*}
This is a probabilistic estimate for an arbitrary but fixed $z$. The
covering Lemma \ref{lemma-cov} implies that
\begin{equation*}
\left[ \sup_{\mathbf{z} \in \overline{B}_{p_1}^{n_1} \times \cdots \times 
\overline{B}_{p_d}^{n_d}} \left|A\left(t,\mathbf{z}\right)\right| \geq 2 
\sqrt{2}R \right] \subset \bigcup_{w\in\mathcal{W}} \left[%
\left|A\left(t,w\right)\right| \geq \sqrt{2}R \right].
\end{equation*}

Hence, applying the preceding probabilistic estimate to each point, we get
that the probability $P_\mathcal{S}$ of the set 
\begin{equation*}
\mathcal{S}:=\left[ \sup_{\mathbf{z} \in \overline{B}_{p_1}^{n_1} \times
\cdots \times \overline{B}_{p_d}^{n_d}} \left|A\left(t,\mathbf{z}%
\right)\right| \geq 2\sqrt{2}R \right] 
\end{equation*}
is at most 
\begin{equation*}
4\left(1+4d\right)^{2\sum_{k=1}^{d}n_k} \cdot \exp \left[- R\lambda + \frac{1%
}{2} \lambda^2 \left(d!\right)^{2\left(1-\frac{1}{m(p)}\right)} \cdot
\prod_{k=1}^d {n_k}^{2\left(\frac{1}{2}-\frac{1}{M(p_k)}\right)} 
%\cdots {n_d}^{2\left(\frac{1}{2}-\frac{1}{M(p_d)}\right)}
\right]. 
\end{equation*}

Now taking the following values for the parameters $R$ and $\lambda$, 
\begin{gather*}
R := \left(2\left(d!\right)^{2\left(1-\frac{1}{m(p)}\right)} \prod_{k=1}^d {%
n_k}^{2\left(\frac{1}{2}-\frac{1}{M(p_k)}\right)}
\log\left(8\left(1+4d\right)^{2\sum_{k=1}^{d}n_k}\right)\right)^\frac{1}{2},
\\
\lambda := \frac{R}{\left(d!\right)^{2\left(1-\frac{1}{m(p)}\right)} {n_1}%
^{2\left(\frac{1}{2}-\frac{1}{M(p_1)}\right)} \cdots {n_d}^{2\left(\frac{1}{2%
}-\frac{1}{M(p_d)}\right)}},
\end{gather*}
we conclude that, with these choices, the probability $P_{\mathcal{S}}$ that the supremum of $\left|A\left(t,\cdot\right)\right|$ over $\overline{B}_{p_1}^{n_1} \times \cdots \times \overline{B}_{p_d}^{n_d}$ exceeds $2\sqrt{2}R$ is $P_{\mathcal{S}} \leq 1/2$. Therefore, there exists a particular value $t_0$ such that the supremum of $\left|A\left(t_0,\cdot\right)\right|$ over $\overline{B}_{p_1}^{n_1} \times \cdots \times \overline{B}_{p_d}^{n_d}$ is no more than 
\begin{equation*}
2\sqrt{2}R \leq 8\sqrt{\log\left(1+4d\right)}\left(d!\right)^{\left(1-\frac{1%
}{m(p)}\right)} \prod_{k=1}^{d}{n_k}^{ \left(\frac{1}{2} - \frac{1}{M(p_k)}%
\right)} \left({\sum_{k=1}^{d}n_k}\right)^\frac{1}{2}.
\end{equation*}
%The values of the Rademacher functions at the particular value $t_0$ produce the pattern of signs $\varepsilon_{\mathbf{j}}$ stated.
The values of the Rademacher functions at $t_0$ produce the signs $\varepsilon_{\mathbf{j}}$ stated.
\end{proof}

%\bigskip

Borrowing an argument from \cite{Bayart}, the estimate of the norm is improved when dealing with some $p_{k}$ between $1$ and $2$. We will fix the following notation for the constant involved in the results of this section:
\[
%C_{d}:=8(d!)^{1-\max\left\{ \frac{1}{2},\frac{1}{p}\right\} }\sqrt{\log(1+4d)}.
C_{d}:=8(d!)^{1-\max\left\{ \frac{1}{2},\frac{1}{ \max\{p_{1},\ldots,p_{d}\} }\right\} }\sqrt{\log(1+4d)}.
\]
%with $p:=\max\{p_{1},\ldots,p_{d}\}$.

\begin{theorem}
\label{KSZ_gen} Let $d,n_{1}, \ldots, n_{d} \geq1$ be positive integers and $%
p_{1},\ldots,p_{d} \in\left[1, \infty\right]$. Then there exist signs $%
\varepsilon_{\mathbf{j}} = \pm1$ and a $d$-linear map $A:
\ell_{p_{1}}^{n_{1}} \times\cdots\times \ell_{p_{d}}^{n_{d}} \to\mathbb{K}$
of the form 
\begin{align*}
A\left( z^{1},\ldots,z^{d}\right) =\sum_{j_{1}=1}^{n_{1}} \cdots
\sum_{j_{d}=1}^{n_{d}} \varepsilon_{\mathbf{j}} z^{1}_{j_{1}}\cdots
z^{d}_{j_{d}},
\end{align*}
such that 
\begin{equation*}
\|A\| \leq(C_{d})^{2\left( 1-\frac{1}{\gamma}\right) } \cdot \left(
\sum_{k=1}^{d}n_{k}\right) ^{1-\frac{1}{\gamma}} \cdot\prod_{k =1}^{d}
n_{k}^{\max\left\{ \frac{1}{\gamma} - \frac{1}{p_{k}},0\right\} }, 
\end{equation*}
with 
$\gamma:= \min\left\{ 2,\max\{p_{k} : p_{k} \leq2\}\right\} $.
\end{theorem}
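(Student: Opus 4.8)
The plan is to fix one pattern of signs by applying Proposition \ref{KSZ} at an auxiliary exponent configuration, and then transport the resulting estimate to the target space $\ell_{p_1}^{n_1}\times\cdots\times\ell_{p_d}^{n_d}$ by multilinear complex (Calderón) interpolation, using duality only to evaluate the two endpoint norms. Following the idea borrowed from \cite{Bayart}, I would split the indices into $S:=\{k:p_k\le 2\}$ and $L:=\{k:p_k>2\}$, so that $\gamma=\max\{p_k:k\in S\}\le 2$ (with $\gamma=2$ when $S=\varnothing$), and set $\theta:=2\bigl(1-\tfrac1\gamma\bigr)\in[0,1]$. I then define two configurations $\mathbf a,\mathbf b$ by requiring $a_k=1$ and $\tfrac1{p_k}=(1-\theta)+\tfrac{\theta}{b_k}$ for $k\in S$, and $a_k=b_k=p_k$ for $k\in L$. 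A one-line computation gives $\tfrac1{b_k}=\bigl(\tfrac1{p_k}+1-\tfrac2\gamma\bigr)/\bigl(2-\tfrac2\gamma\bigr)\in[\tfrac12,1]$ for $k\in S$, hence $b_k\in[1,2]$; in particular $\ell_{p_k}^{n_k}=[\ell_{a_k}^{n_k},\ell_{b_k}^{n_k}]_\theta$ isometrically for every $k$, which is exactly the input the interpolation theorem needs.

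First I would invoke Proposition \ref{KSZ} with the exponents $\mathbf b$ to produce signs $\varepsilon_{\mathbf j}$ and the associated form $A$. Since $b_k\le 2$ for $k\in S$ and $b_k=p_k>2$ for $k\in L$, the $S$-coordinates contribute no power of $n_k$ and one obtains $\|A\|_{\mathbf b}\le C_d\prod_{k\in L}n_k^{\frac12-\frac1{p_k}}\bigl(\sum_k n_k\bigr)^{1/2}$, where $C_d$ is the Proposition's constant (which coincides with the theorem's $C_d$ whenever $L\neq\varnothing$). Next, for the \emph{same} signs, I would bound $\|A\|_{\mathbf a}$ by the trivial estimate $|A(z)|\le\prod_k\|z^k\|_1$: on the $\ell_1$-coordinates $(k\in S)$ this costs nothing, while on each $\ell_{p_k}$-coordinate $(k\in L)$ duality $\bigl(\ell_{p_k}^{*}=\ell_{p_k'}\bigr)$ gives $\|z^k\|_1\le n_k^{1-1/p_k}$, so that $\|A\|_{\mathbf a}\le\prod_{k\in L}n_k^{1-1/p_k}$, valid for every unimodular form and in particular for our $A$. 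This is where duality enters.

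Applying the multilinear Calderón interpolation theorem to the single form $A$ between $\mathbf a$ and $\mathbf b$ with parameter $\theta$ yields $\|A\|_{\mathbf p}\le\|A\|_{\mathbf a}^{\,1-\theta}\|A\|_{\mathbf b}^{\,\theta}$, and the remaining work is purely arithmetic. One checks that the constant becomes $C_d^{\theta}=(C_d)^{2(1-1/\gamma)}$, that the factor is $\bigl(\sum_k n_k\bigr)^{\theta/2}=\bigl(\sum_k n_k\bigr)^{1-1/\gamma}$, that each $k\in S$ contributes the exponent $0=\max\bigl(\tfrac1\gamma-\tfrac1{p_k},0\bigr)$, and that each $k\in L$ contributes $(1-\theta)\bigl(1-\tfrac1{p_k}\bigr)+\theta\bigl(\tfrac12-\tfrac1{p_k}\bigr)=\tfrac1\gamma-\tfrac1{p_k}=\max\bigl(\tfrac1\gamma-\tfrac1{p_k},0\bigr)$, which is precisely the bound asserted in Theorem \ref{KSZ_gen}. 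The degenerate values $\theta=1$ ($\gamma=2$, recovering Proposition \ref{KSZ} directly) and $\theta=0$ ($\gamma=1$, where $\mathbf a=\mathbf p$ and the bound reduces to the duality estimate) should be recorded as consistency checks.

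The main obstacle is the bookkeeping that forces these four identities to hold at once: $\theta$ is dictated by the required power of $\sum_k n_k$, and it must simultaneously keep the auxiliary exponents $b_k$ inside $[1,+\infty]$ and reproduce the constant exactly. The genuinely delicate point is the constant in the purely small case $L=\varnothing$ with $1<\gamma<2$: there the top exponent of the configuration $\mathbf b$ equals $2$ (attained at the coordinate with $p_k=\gamma$), so a direct application of Proposition \ref{KSZ} produces the base $8(d!)^{1/2}\sqrt{\log(1+4d)}$ rather than the sharper $8(d!)^{1-1/\gamma}\sqrt{\log(1+4d)}$ demanded by the stated $C_d$. Recovering the claimed constant in this subcase requires a separate treatment—either applying the Proposition at a configuration whose top exponent is $\gamma$ or iterating the interpolation—and I would isolate and handle it on its own, the mixed case $L\neq\varnothing$ already matching both the exponents and the constant verbatim.
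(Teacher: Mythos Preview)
Your approach is essentially the paper's own proof. Both arguments (i) apply Proposition~\ref{KSZ} at an auxiliary configuration in which the ``large'' coordinates $k\in L$ stay at $p_k$ while the ``small'' coordinates $k\in S$ sit in $[1,2]$, (ii) bound the norm at the configuration $\mathbf a=(p_k\text{ for }k\in L,\ 1\text{ for }k\in S)$ by the trivial H\"older/duality estimate $\|A\|_{\mathbf a}\le\prod_{k\in L}n_k^{1-1/p_k}$, and (iii) apply Riesz--Thorin with parameter $\theta=2/\gamma'$. The only cosmetic difference is that the paper places every small coordinate at exponent $2$, interpolates to $\ell_\gamma$ on those slots, and then uses the free inclusion $B_{\ell_{p_k}^{n_k}}\subset B_{\ell_\gamma^{n_k}}$ for $p_k\le\gamma$, whereas you choose individual $b_k\in[1,2]$ so as to land directly on $\ell_{p_k}$. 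Since $\max(1/2-1/b_k,0)=0$ either way, Proposition~\ref{KSZ} yields the identical bound at the $\mathbf b$-endpoint, and the final exponents agree on the nose; your route merely saves the one-line embedding step.

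Your remark about the constant in the purely small case $L=\varnothing$, $1<\gamma<2$, is well taken and in fact applies verbatim to the paper's argument: applying Proposition~\ref{KSZ} with the small slots at $2$ produces the base constant $8(d!)^{1/2}\sqrt{\log(1+4d)}$, not the smaller $8(d!)^{1-1/\gamma}\sqrt{\log(1+4d)}$ that the theorem's fixed $C_d$ (with $p=\max_k p_k=\gamma$) would demand. The paper glosses over this by using the symbol $C_d$ for both quantities, so what you flag as ``the genuinely delicate point'' is an imprecision already present in the source rather than a defect of your variant.
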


\begin{proof}
If $p_{k} \geq2$, for all $k=1, \cdots, d$, the estimate above coincide with Proposition \ref{KSZ}. For the sake of clarity, we shall assume that $\{k\in\{1,\cdots,d\}: p_{k} \geq2\} = \{1,\dots,m\}$ and, for $p>0$, define 
\begin{equation*}
\beta(p):= 
\begin{cases}
1-\frac{1}{p}, \ \text{if \,} p \geq2; \\ 
0, \ \text{otherwise}.%
\end{cases}
\end{equation*}

The general form of the Kahane--Salem--Zygmund inequality guarantees the
existence of a $d$-linear map $A: \ell_{p_{1}}^{n_{1}} \times\cdots\times
\ell_{p_{m}}^{n_{m}} \times\ell_{2}^{n_{m+1}} \times\cdots\times\ell
_{2}^{n_{d}} \to\mathbb{K}$ such that 
\begin{align*}
\|A\|_{\mathcal{L}\left( \ell_{p_{1}}^{n_{1}}
\times\cdots\times\ell_{p_{m}}^{n_{m}} \times\ell_{2}^{n_{m+1}}
\times\cdots\times\ell_{2}^{n_{d}}; \mathbb{K}\right) } &\leq C_{d}
\cdot\left( \sum_{k=1}^{d}n_{k}\right)^{\frac{1}{2}} \cdot\prod_{k =1}^{m}
n_{k}^{\frac12 - \frac1{p_{k}}} \\
&= C_{d} \cdot\left( \sum_{k=1}^{d}n_{k}\right) ^{\frac{1}{2}} \cdot
\prod_{k =1}^{d} n_{k}^{\max\left\{ \frac12 - \frac1{p_{k}},0\right\} }.
\end{align*}
The restriction of this map to $\ell_{p_{1}}^{n_{1}} \times\cdots\times
\ell_{p_{m}}^{n_{m}} \times\ell_{1}^{n_{m+1}}
\times\cdots\times\ell_{1}^{n_{d}}$ has norm no greater than 
\begin{equation*}
\prod_{k=1}^{m} n_{k}^{1/p_{k}^{\prime}} = \prod_{k=1}^{d}
n_{k}^{\beta(p_{k})}. 
\end{equation*}

Since $1 \leq\gamma\leq2$ there exists $\theta\in(0,1)$ such that $\frac{1}{%
\gamma} = \frac{1- \theta}{1} + \frac{\theta}{2}$. Note that $\theta=\frac{2%
}{\gamma^{\prime}}$. Now write $\frac{1}{p_{k}} = \frac{1- \theta}{p_{k}} + 
\frac{\theta}{p_{k}}$ for all $k \in\{1, \cdots, m\}$. By using the
Riesz--Thorin interpolation theorem (see \cite[p.18]{BergLofs} or \cite%
{Bayart}), 
\begin{align*}
& \|A\|_{\mathcal{L}\left( \ell_{p_{1}}^{n_{1}} \times\cdots\times
\ell_{p_{m}}^{n_{m}} \times\ell_{\gamma}^{n_{m+1}} \times\cdots\times
\ell_{\gamma}^{n_{d}}; \mathbb{K}\right) } \\
& \leq\|A\|_{\mathcal{L}\left( \ell_{p_{1}}^{n_{1}} \times\cdots\times
\ell_{p_{m}}^{n_{m}} \times\ell_{1}^{n_{m+1}} \times\cdots\times\ell
_{1}^{n_{d}}; \mathbb{K}\right) }^{1- \theta} \cdot\|A\|_{\mathcal{L}\left(
\ell_{p_{1}}^{n_{1}} \times\cdots\times\ell_{p_{m}}^{n_{m}}
\times\ell_{2}^{n_{m+1}} \times\cdots\times\ell_{2}^{n_{d}}; \mathbb{K}%
\right)}^{\theta} \\
& \leq\prod_{k=1}^{d} n_{k}^{(1- \theta)\beta(p_{k})} \left[ C_{d}
\cdot\left( \sum_{k=1}^{d}n_{k}\right) ^{\frac{1}{2}} \cdot \prod_{k =1}^{d}
n_{k}^{\max\left\{ \frac12 - \frac1{p_{k}}, 0\right\} }\right] ^{\theta} \\
& = (C_{d})^{\theta} \cdot\left( \sum_{k=1}^{d}n_{k}\right)^{\frac{\theta}{2}%
} \cdot\prod_{k =1}^{d} n_{k}^{(1- \theta)\beta(p_{k})+\max\left\{\frac{%
\theta}{\gamma} - \frac{\theta}{p_{k}}, 0\right\} } \\
& = (C_{d})^{\frac{2}{\gamma^{\prime}}} \cdot\left( \sum_{k=1}^{d}
n_{k}\right)^{\frac{1}{\gamma^{\prime}}} \cdot\prod_{k =1}^{d}
n_{k}^{\max\left\{ \frac1\gamma- \frac1{p_{k}}, 0\right\} }.
\end{align*}

For each $k \notin\{1, \cdots, m\}$ the unit ball of $\ell_{p_{k}}^{n_{k}}$
lies in the unit ball of $\ell_{\gamma}^{n_{k}}$, so

\begin{align*}
\|A\|_{\mathcal{L}\left( \ell_{p_{1}}^{n_{1}}
\times\cdots\times\ell_{p_{d}}^{n_{d}}; \mathbb{K}\right) } & \leq\|A\|_{%
\mathcal{L} \left( \ell_{p_{1}}^{n_{1}}
\times\cdots\times\ell_{p_{m}}^{n_{m}} \times\ell_{\gamma}^{n_{m+1}}
\times\cdots\times\ell_{\gamma}^{n_{d}}; \mathbb{K}\right) } \\
& \leq(C_{d})^{\frac{2}{\gamma^{\prime}}} \cdot\left( \sum_{k=1}^{d} n_{k}
\right)^{ \frac{1}{\gamma^{\prime}}} \cdot \prod_{k=1}^{d}
n_{k}^{\max\left\{ \frac1\gamma- \frac1{p_{k}}, 0\right\}}.
\end{align*}
\end{proof}

In order to clarify the result above, we illustrate the simpler case, when
dealing with $n_{1} =\dots= n_{d} = n$.

\begin{corollary}
The norm of the $d$-linear form $A$ in Theorem \ref{KSZ_gen}, with $%
n_{1}=\dots=n_{d}=n$, has the following estimate: 
\begin{equation*}
\Vert A\Vert\leq C\cdot n^{1-\frac{1}{\gamma}+\sum_{k=1}^{d}\max\left\{ 
\frac{1}{\gamma}-\frac{1}{p_{k}},0\right\} }, 
\end{equation*}
with $C=\left( d\cdot C_{d}^{2}\right) ^{1-\frac{1}{\gamma}}$ and $%
\gamma:=\min\left\{ 2,\max\{p_{k}:p_{k}\leq2\}\right\} $.
\end{corollary}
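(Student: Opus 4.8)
The plan is to specialize the general bound of Theorem \ref{KSZ_gen} to the diagonal case $n_{1} = \cdots = n_{d} = n$; the proof is a direct substitution requiring no additional ingredients. Theorem \ref{KSZ_gen} furnishes signs $\varepsilon_{\mathbf{j}} = \pm1$ and a $d$-linear form $A$ on $\ell_{p_{1}}^{n_{1}} \times \cdots \times \ell_{p_{d}}^{n_{d}}$ with
\[
\|A\| \leq (C_{d})^{2\left(1 - \frac{1}{\gamma}\right)} \left(\sum_{k=1}^{d} n_{k}\right)^{1 - \frac{1}{\gamma}} \prod_{k=1}^{d} n_{k}^{\max\left(\frac{1}{\gamma} - \frac{1}{p_{k}}, 0\right)},
\]
and I would invoke precisely this $A$.

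First I would set $n_{k} = n$ for every $k$. The summatory factor collapses to $\sum_{k=1}^{d} n_{k} = dn$, whence $\left(\sum_{k} n_{k}\right)^{1 - \frac{1}{\gamma}} = (dn)^{1 - \frac{1}{\gamma}} = d^{\,1 - \frac{1}{\gamma}}\, n^{\,1 - \frac{1}{\gamma}}$. Next, since every factor in the product now shares the common base $n$, the product of powers becomes a single power whose exponent is the sum of the individual exponents, namely $\prod_{k=1}^{d} n^{\max\left(\frac{1}{\gamma} - \frac{1}{p_{k}}, 0\right)} = n^{\sum_{k=1}^{d} \max\left(\frac{1}{\gamma} - \frac{1}{p_{k}}, 0\right)}$.

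Then I would gather the two powers of $n$ --- the factor $n^{\,1 - \frac{1}{\gamma}}$ arising from the summatory term and the factor $n^{\sum_{k} \max(\cdots)}$ from the product --- into the single exponent $1 - \frac{1}{\gamma} + \sum_{k=1}^{d} \max\left(\frac{1}{\gamma} - \frac{1}{p_{k}}, 0\right)$ claimed in the statement. Finally, the remaining dimension-free scalars $(C_{d})^{2\left(1 - \frac{1}{\gamma}\right)}$ and $d^{\,1 - \frac{1}{\gamma}}$ combine as $(C_{d})^{2\left(1 - \frac{1}{\gamma}\right)}\, d^{\,1 - \frac{1}{\gamma}} = \left(d\, C_{d}^{2}\right)^{1 - \frac{1}{\gamma}} = C$, which is exactly the constant in the corollary. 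Since every step is an exact algebraic identity, there is no genuine obstacle here; the only point requiring any care is the bookkeeping that merges the two separate sources of powers of $n$ into one exponent and, in assembling the constant, recognizes that the factor $d^{\,1 - \frac{1}{\gamma}}$ produced by $(dn)^{1 - \frac{1}{\gamma}}$ belongs with $(C_{d})^{2\left(1 - \frac{1}{\gamma}\right)}$ rather than with the $n$-power.
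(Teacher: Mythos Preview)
Your proof is correct and matches the paper's approach exactly: the corollary is stated there without proof, as an immediate specialization of Theorem \ref{KSZ_gen} to the case $n_1=\cdots=n_d=n$, and your direct substitution carries this out precisely.
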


The polynomial and vector valued version of the previous result read as
follows.

\begin{corollary}
Let $p\in\left[ 1,\infty\right] $, and integers $n,d\geq1$. Then there exist
signs $\varepsilon_{\mathbf{j}}=\pm1$ and a homogeneous polynomial $P$ of
degree $d$ in the variable $z\in\ell_{p}^{n}$ of the form 
\begin{equation*}
\sum_{\left\vert \alpha\right\vert =d} \varepsilon_{\mathbf{j}} \binom{d}{%
\alpha}z^{\alpha} 
\end{equation*}
such that 
\begin{equation*}
\Vert P\Vert\leq(C_{d})^{2\left( 1-\frac{1}{\gamma}\right) }\cdot n^{1-\frac{%
1}{\gamma}+d\cdot\max\left\{ \frac{1}{2}-\frac{1}{p},0\right\} }, 
\end{equation*}
with $\gamma:=\min\{p,2\}$.
\end{corollary}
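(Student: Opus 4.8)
The plan is to deduce the polynomial bound from the multilinear one by restricting the $d$-linear form furnished by Theorem \ref{KSZ_gen} to the diagonal. First I would specialize Theorem \ref{KSZ_gen} to the case $n_{1}=\cdots=n_{d}=n$ and $p_{1}=\cdots=p_{d}=p$. This produces signs $\varepsilon_{\mathbf{j}}=\pm1$ and a $d$-linear form
\[
A(z^{1},\ldots,z^{d})=\sum_{j_{1},\ldots,j_{d}=1}^{n}\varepsilon_{\mathbf{j}}\,z^{1}_{j_{1}}\cdots z^{d}_{j_{d}}
\]
on $(\ell_{p}^{n})^{d}$, whose norm satisfies the estimate of Theorem \ref{KSZ_gen} with $\gamma=\min(p,2)$ (the exponent of the sum being $1-\tfrac1\gamma$, and each factor contributing $n^{\max(1/\gamma-1/p,0)}$).

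The crucial observation is that this form is \emph{symmetric}. Indeed, in the construction underlying Proposition \ref{KSZ} the sign $\varepsilon_{\mathbf{j}}$ equals the value $r_{\mathbf{k}}(t_{0})$ of the single Rademacher function attached to the non-decreasing rearrangement $\mathbf{k}$ of $\mathbf{j}$, so $\varepsilon_{\mathbf{j}}$ depends only on the permutation class of $\mathbf{j}$; moreover the interpolation and restriction steps passing from Proposition \ref{KSZ} to Theorem \ref{KSZ_gen} leave the coefficients untouched. Hence $\varepsilon_{\sigma(\mathbf{j})}=\varepsilon_{\mathbf{j}}$ for every permutation $\sigma$, and $A$ is symmetric.

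I would then set $P(z):=A(z,\ldots,z)$ and expand. Grouping the tuples $\mathbf{j}$ according to the multi-index $\alpha$ with $|\alpha|=d$ that records how often each index occurs, each class consists of $\binom{d}{\alpha}$ tuples producing the monomial $z^{\alpha}$, all carrying the common sign $\varepsilon_{\alpha}$ by symmetry. Thus $P(z)=\sum_{|\alpha|=d}\varepsilon_{\alpha}\binom{d}{\alpha}z^{\alpha}$ is exactly of the prescribed form. Restricting the supremum defining $\|A\|$ to the diagonal $z^{1}=\cdots=z^{d}$ gives the trivial bound $\|P\|=\sup_{\|z\|_{p}\leq1}|A(z,\ldots,z)|\leq\|A\|$, so no polarization constant is incurred. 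It then remains only to rewrite the exponent: whether $p\geq2$ (so $\gamma=2$) or $1\le p\le2$ (so $\gamma=p$), one checks $\max(1/\gamma-1/p,0)=\max(1/2-1/p,0)$, and with all $n_{k}=n$ the sum factor is $(dn)^{1-1/\gamma}$, which turns the bound of Theorem \ref{KSZ_gen} into the asserted inequality.

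The only delicate point is the symmetry of $A$: without it the diagonal coefficients would merely be integers of absolute value at most $\binom{d}{\alpha}$, rather than exactly $\pm\binom{d}{\alpha}$, and the normalized form claimed in the statement would break down. I also expect the harmless factor $d^{\,1-1/\gamma}$ arising from $(\sum_{k}n)^{1-1/\gamma}=(dn)^{1-1/\gamma}$ to be absorbed into the $d$-dependent constant, exactly as in the preceding corollary where the constant is written $\left(d\,C_{d}^{2}\right)^{1-1/\gamma}$.
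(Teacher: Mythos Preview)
Your argument is correct and is precisely the deduction the paper has in mind: the corollary is stated without proof, immediately after the $n_1=\cdots=n_d=n$ specialization of Theorem~\ref{KSZ_gen}, so the intended route is indeed to restrict the multilinear form to the diagonal. Your identification of the only nontrivial point---that the construction in Proposition~\ref{KSZ} yields a \emph{symmetric} form because each sign $\varepsilon_{\mathbf j}$ is the value $r_{\mathbf k}(t_0)$ of the Rademacher function attached to the non-decreasing rearrangement of $\mathbf j$---is exactly right, and it is what forces the diagonal coefficients to be $\pm\binom{d}{\alpha}$ rather than arbitrary signed sums. The interpolation in Theorem~\ref{KSZ_gen} acts only on the domain norms and leaves the coefficients fixed, so symmetry survives, as you note.

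Your observation about the stray factor $d^{\,1-1/\gamma}$ is also accurate: the bound coming out of Theorem~\ref{KSZ_gen} is $(C_d)^{2(1-1/\gamma)}(dn)^{1-1/\gamma}n^{d\max(1/2-1/p,0)}$, so the constant in the corollary should really be $\bigl(d\,C_d^{2}\bigr)^{1-1/\gamma}$, matching the preceding corollary. This is a cosmetic slip in the paper's statement, not a flaw in your reasoning.
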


\begin{corollary}
Let $d,n_1, \ldots, n_{d+1} \geq 1, \, s,p_1,\ldots,p_d \in \left[1,\infty%
\right]$. Then there exist signs $\varepsilon_{\mathbf{j}} = \pm 1$ and a $d$%
-linear map $A: \ell_{p_1}^{n_1} \times \cdots \times \ell_{p_d}^{n_d}
\to\ell_{s}^{n_{d+1}}$ of the form 
\begin{equation*}
A\left(z^1,\ldots,z^d\right) =\sum_{j_1=1}^{n_1} \ldots \sum_{j_d=1}^{n_d}
\sum_{j_{d+1}=1}^{n_{d+1}} \varepsilon_{\mathbf{j}} z^1_{j_1}\cdots
z^d_{j_d}e_{j_{d+1}},
\end{equation*}
such that 
\begin{equation*}
\|A\| \leq (C_d)^{2 \left(1 - \frac{1}{\gamma} \right)} \cdot
\left(\sum_{k=1}^{d+1}n_k\right)^{1 - \frac{1}{\gamma}} \cdot
\prod_{k=1}^{d+1} n_k^{\max\left\{\frac1\gamma - \frac1{p_k},0\right\}},
\end{equation*}
with $p_{d+1}:=s^{\prime}$ and $\gamma:= \min\left\{2,\max\{p_k : p_k \leq
2\}\right\}$.
\end{corollary}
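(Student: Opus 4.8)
The plan is to reduce this vector-valued statement to the scalar inequality of Theorem~\ref{KSZ_gen} by means of the standard identification of an $\ell_s^{n_{d+1}}$-valued $d$-linear map with a scalar $(d+1)$-linear form, furnished by the isometric duality $\bigl(\ell_s^{n_{d+1}}\bigr)^\ast=\ell_{s'}^{n_{d+1}}$. To any pattern of signs $\varepsilon_{\mathbf{j}}=\pm1$ I would attach the $(d+1)$-linear form
\[
\widetilde{A}\bigl(z^1,\ldots,z^d,z^{d+1}\bigr)
=\sum_{j_1=1}^{n_1}\cdots\sum_{j_{d+1}=1}^{n_{d+1}}
\varepsilon_{\mathbf{j}}\,z^1_{j_1}\cdots z^d_{j_d}\,z^{d+1}_{j_{d+1}}
\]
on $\ell_{p_1}^{n_1}\times\cdots\times\ell_{p_d}^{n_d}\times\ell_{p_{d+1}}^{n_{d+1}}$, where $p_{d+1}:=s'$. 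Since $\widetilde{A}$ and the candidate map $A$ carry the same coefficients $\varepsilon_{\mathbf{j}}$, it suffices to prove the norm identity $\|A\|=\|\widetilde{A}\|$ and then feed $\widetilde{A}$ into the scalar theorem.

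First I would establish $\|A\|=\|\widetilde{A}\|$. For vectors $z^k$ in the closed unit ball of $\ell_{p_k}^{n_k}$, the $j_{d+1}$-th coordinate of $A(z^1,\ldots,z^d)$ equals $\sum_{j_1,\ldots,j_d}\varepsilon_{\mathbf{j}}\,z^1_{j_1}\cdots z^d_{j_d}$; hence, using the isometric duality $\|y\|_{\ell_s^{n_{d+1}}}=\sup\bigl\{|\langle y,w\rangle|:\|w\|_{\ell_{s'}^{n_{d+1}}}\leq1\bigr\}$, valid over both $\mathbb{R}$ and $\mathbb{C}$, one has
\[
\bigl\|A(z^1,\ldots,z^d)\bigr\|_{\ell_s^{n_{d+1}}}
=\sup_{\|w\|_{\ell_{s'}^{n_{d+1}}}\leq1}\bigl|\widetilde{A}(z^1,\ldots,z^d,w)\bigr|.
\]
Taking the supremum over all $z^k$ in their unit balls, and recalling $p_{d+1}=s'$, the right-hand side becomes exactly $\|\widetilde{A}\|$, so the two norms coincide.

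Finally, I would apply Theorem~\ref{KSZ_gen} in degree $d+1$ to the exponents $p_1,\ldots,p_d,p_{d+1}$, with $\gamma=\min\{2,\max\{p_k:p_k\leq2\}\}$ now taken over all $k\in\{1,\ldots,d+1\}$. This produces a single pattern of signs $\varepsilon_{\mathbf{j}}=\pm1$ for which the scalar form $\widetilde{A}$ obeys the Kahane--Salem--Zygmund bound of that theorem; since $\|A\|=\|\widetilde{A}\|$, the same signs yield precisely the estimate asserted for the vector-valued map $A$, the sum and product now ranging up to $d+1$ with the convention $p_{d+1}=s'$. The only step requiring genuine care is the duality isometry of the second paragraph---keeping track of the conjugate exponent $s'$ and of the pairing in the complex case; beyond this there is no real obstacle, as the whole analytic content is already packaged in Theorem~\ref{KSZ_gen}, and the present corollary is merely its vector-valued transcription.
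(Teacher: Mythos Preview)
Your proposal is correct and follows exactly the paper's approach: the paper proves this corollary by invoking the isometric correspondence (via the duality $(\ell_s^{n_{d+1}})^\ast=\ell_{s'}^{n_{d+1}}$) between $d$-linear maps $\ell_{p_1}^{n_1}\times\cdots\times\ell_{p_d}^{n_d}\to\ell_s^{n_{d+1}}$ and $(d+1)$-linear forms $\ell_{p_1}^{n_1}\times\cdots\times\ell_{p_d}^{n_d}\times\ell_{s'}^{n_{d+1}}\to\mathbb{K}$, and then applying Theorem~\ref{KSZ_gen} in degree $d+1$. The only cosmetic point is that this route naturally produces the constant $C_{d+1}$ (with $p=\max(p_1,\ldots,p_{d+1})$) rather than $C_d$; the paper's statement writes $C_d$, which should be read as the constant coming from Theorem~\ref{KSZ_gen} applied with $d+1$ factors.
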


In order to prove the last result, one just need to use duality and the isometric correspondence between $(d+1)$-linear forms $\ell_{p_1}^{n_1} \times \cdots \times \ell_{p_d}^{n_d}\times \ell_{s^{\prime}}^{n_{d+1}} \to \mathbb{K}$ and $d$-linear maps $\ell_{p_1}^{n_1} \times \cdots \times \ell_{p_d}^{n_d} \to \ell_{s}^{n_{d+1}}$ given by
\begin{align*}
\left(z \mapsto
 \sum_{j_1=1}^{n_1} \ldots \sum_{j_d=1}^{n_d} \sum_{j_{d+1}=1}^{n_{d+1}}
  a_{i_1,\ldots,i_d}^{i_{d+1}} z^1_{j_1} \cdots z^d_{j_d}z^{d+1}_{j_{d+1}}
\right) \\
\mapsto \left(
z \mapsto \sum_{j_1=1}^{n_1} \ldots \sum_{j_d=1}^{n_d} \sum_{j_{d+1}=1}^{n_{d+1}}
 a_{i_1,\ldots,i_d}^{i_{d+1}} z^1_{j_1}\cdots z^d_{j_d}e_{j_{d+1}}
\right).
\end{align*}

%=======================================
%  SECTION 3:  ASYMPTOTICALLY OPTIMAL
%=======================================
\section{Optimality} \label{sec-ksz-opt}

The following result shows that the Kahane--Salem--Zygmund inequality obtained in Theorem \ref{KSZ_gen} is sharp when $p_{1},\dots,p_{d}\in\lbrack2,\infty]$.

\begin{theorem}\label{op_exp}
Let $p_{1},\dots,p_{d}\in\lbrack2,\infty]$. Then 
\begin{align}  \label{est.inf.}
\frac{1}{d \cdot 2^{\frac{d-1}{2}}} \leq\inf \frac{\|A\|} {\left(
n_{1}^{1/2}+\cdots+n_{d}^{1/2}\right) \prod_{j=1}^{d}n_{j}^{\frac{1}{2}-%
\frac{1}{p_{j}}}} \leq C_{d},
\end{align}
with $C_{d}:=8(d!)^{1-\max\left\{ \frac{1}{2},\frac{1}{\max\{p_{1},%
\ldots,p_{d}\}}\right\} }\sqrt{\log(1+4d)}$, for all unimodular $d$-linear
forms $A:\ell_{p_{1}}^{n_1} \times\cdots\times \ell_{p_{d}}^{n_d} \to\mathbb{K}$.
\end{theorem}

\begin{proof} Littlewood's $\left( \ell_{1},\ell_{2}\right)$-inequalities (see \cite{ABPS_HL,ANNPR_POS}) tell us that 
\begin{align*}
\sum\limits_{i_{1}=1}^{n_{1}}\left(
\sum\limits_{i_{2},...,i_{d}=1}^{n_{2},...,n_{d}}\left\vert T\left(
e_{i_{1},...,}e_{i_{d}}\right) \right\vert ^{2}\right) ^{1/2} & \leq\left( 
\sqrt{2}\right) ^{d-1}\left\Vert T\right\Vert , \\
& \vdots \\
\left( \sum\limits_{i_{1},...,i_{d-1}=1}^{n_{1},...,n_{d-1}}\left(
\sum\limits_{i_{d}=1}^{n_{d}}\left\vert T\left(
e_{i_{1},...,}e_{i_{d}}\right) \right\vert \right) ^{2}\right) ^{1/2} &
\leq\left( \sqrt {2}\right) ^{d-1}\left\Vert T\right\Vert ,
\end{align*}
for all $T:\ell_{\infty}^{n_{1}}\times\cdots\times\ell_{\infty}^{n_{d}}%
\rightarrow\mathbb{K}$. Thus%
\begin{align*}
n_{1}n_{2}^{1/2}...n_{d}^{1/2} & \leq\left( \sqrt{2}\right) ^{d-1}\left\Vert
T\right\Vert , \\
& \vdots \\
n_{1}^{1/2}...n_{d-1}^{1/2}n_{d} & \leq\left( \sqrt{2}\right)
^{d-1}\left\Vert T\right\Vert ,
\end{align*}
for all unimodular $T:\ell_{\infty}^{n_{1}}\times\cdots\times\ell_{\infty
}^{n_{d}}\rightarrow\mathbb{K}$. Then
\begin{equation*}
\left\Vert T\right\Vert \geq
\frac{1}{d\left( \sqrt{2}\right) ^{d-1}}\left( n_{1}^{1/2}+\cdots+n_{d}^{1/2}\right) \prod\limits_{j=1}^{d}n_{j}^{\frac {1}{2}},
\end{equation*}
for all unimodular $T:\ell_{\infty}^{n_{1}}\times\cdots\times\ell_{\infty
}^{n_{d}}\rightarrow\mathbb{K}$. Now consider a unimodular $%
S:\ell_{p_{1}}^{n_{1}}\times\cdots\times\ell_{p_{d}}^{n_{d}}\rightarrow%
\mathbb{K}$. Let us denote by $S_{\infty}$ the same operator but now with
domain $\ell_{\infty }^{n_{1}}\times\cdots\times\ell_{\infty}^{n_{d}}.$ We
have%
\begin{equation*}
\left\Vert S_{\infty}\right\Vert \geq\frac{1}{d\left( \sqrt{2}\right) ^{d-1}}%
\left( n_{1}^{1/2}+\cdots+n_{d}^{1/2}\right) \prod\limits_{j=1}^{d}n_{j}^{%
\frac{1}{2}}
\end{equation*}
and hence there exist $x^{(1)}, \dots, x^{(d)}$ with $\left\vert x_{j}^{(k)}\right\vert =1$ for all $k,j$ such that
\begin{equation*}
\left\vert S_{\infty}\left( x^{(1)},...,x^{(d)}\right) \right\vert \geq 
\frac{1}{d\left( \sqrt{2}\right) ^{d-1}}\left( n_{1}^{1/2}+\cdots
+n_{d}^{1/2}\right) \prod\limits_{j=1}^{d}n_{j}^{\frac{1}{2}}. 
\end{equation*}
Consider%
\begin{equation*}
y^{(k)}=\frac{1}{n_{k}^{1/p_{k}}}x^{(k)}
\end{equation*}
for all $k=1,\dots,d$. Since each $y^{(k)}$ belongs to the closed unit ball of $\ell_{p_{k}}^{n_{k}}$ we have 
\begin{align*}
\left\Vert S\right\Vert & \geq\left\vert S\left( y^{(1)},...,y^{(d)}\right)
\right\vert \\
& =\frac{1}{n_{1}^{1/p_{1}}\cdots n_{d}^{1/p_{d}}}\left\vert S\left(
x^{(1)},...,x^{(d)}\right) \right\vert \\
& =\frac{1}{n_{1}^{1/p_{1}}\cdots n_{d}^{1/p_{d}}}\left\vert S_{\infty
}\left( x^{(1)},...,x^{(d)}\right) \right\vert \\
& \geq\frac{1}{n_{1}^{1/p_{1}}\cdots n_{d}^{1/p_{d}}}\frac{1}{d\left( \sqrt{2%
}\right) ^{d-1}}\left( n_{1}^{1/2}+\cdots+n_{d}^{1/2}\right)
\prod\limits_{j=1}^{d}n_{j}^{\frac{1}{2}} \\
& =\frac{1}{d\left( \sqrt{2}\right) ^{d-1}}\left( n_{1}^{1/2}+\cdots
+n_{d}^{1/2}\right) \prod\limits_{j=1}^{d}n_{j}^{\frac{1}{2}-\frac{1}{p_{j}}%
}.
\end{align*}
We thus get that 
\begin{equation*}
\frac{1}{d\left( \sqrt{2}\right) ^{d-1}}\leq\inf\frac{\left\Vert
A\right\Vert }{\left( n_{1}^{1/2}+\cdots+n_{d}^{1/2}\right) \prod
\limits_{j=1}^{d}n_{j}^{\frac{1}{2}-\frac{1}{p_{j}}}}. 
\end{equation*}
On the other hand, by Theorem \ref{KSZ_gen},\ for all $p_{1},\dots,p_{d}\in\lbrack2,\infty]$,
there is a constant $C_{d}>0$ and a multilinear form $T_{0}:%
\ell_{p_{1}}^{n_{1}} \times\cdots\times\ell_{p_{d}}^{n_{d}} \to \mathbb{K}$
such that 
\begin{equation*}
\left\Vert T_{0}\right\Vert \leq C_{d}\left( n_{1}^{1/2}+\cdots+n_{d}^{1/2}
\right) \prod_{j=1}^{d}n_{j}^{\frac{1}{2}-\frac{1}{p_{j}}}. 
\end{equation*}
Therefore, we conclude that 
\[
\frac{1}{d\left( \sqrt{2}\right)^{d-1}} \leq\inf\frac{\|A\|}{\left(n_{1}^{1/2}+\cdots+n_{d}^{1/2}\right) \prod \limits_{j=1}^{d}n_{j}^{\frac{1}{2}-\frac{1}{p_{j}}}}
\leq C_{d}.
\]
\end{proof}

\begin{remark}[Case $1\leq p \leq 2$]
The search for the optimal constants and exponents involved are natural and relevant topics for further investigation in this framework. The determination of the unknown optimal exponents for the case $p \in (1,2)$ of Theorem \ref{KSZ_gen} rely in an open result on the interpolation of certain multilinear forms (see \cite{AP}): it is well-known that every $d$-linear form on $\ell_p^{n} \times\cdots\times \ell_p^{n}$ is multiple $(r,1)$-summing, with 
\begin{equation*}
r = \frac{dp}{d+p-1}, \quad \text{ for } p=1 \text{ or } p=2. 
\end{equation*}

What about intermediate results for $p \in (1,2)$? If it was possible an interpolative approach would tell us that every $d$-linear operators from $\ell_p^{n} \times\cdots\times \ell_p^{n}$ is multiple $(dp/(d+p-1),1)$-summing, and a simple computation as in the proof of Theorem \ref{op_exp} would tell us that the exponents in Theorem \ref{KSZ_gen}, with $p_1=\cdots=p_d=p \in (1,2)$ and $n_1=\cdots=n_d=n$, would be in fact admissible exponents and give us the estimate for asymptotic behavior pursued in Problem \ref{prob} (see also \eqref{1s}): 
\begin{equation*}
s \geq 1-\frac{1}{p}. 
\end{equation*}
Even in the linear case, similar problems remain open. Based on this discussion presented, we conjecture the following optimal result that also would complete the solution of Problem \ref{prob}.
\end{remark}

\begin{conjecture}
Let $p_{1},\dots,p_{d}\in [1,\infty]$. There exist $B_{d},C_d>0$ such that 
\begin{equation*}
B_d \leq \inf \frac{\|A\|}{ \left(\sum_{k=1}^{d} n_{k}^{1-\frac{1}{\gamma}%
}\right) \cdot \prod_{k =1}^{d} n_{k}^{\max \left\{ \frac{1}{\gamma} - \frac{%
1}{p_{k}},0\right\}} } \leq C_d, 
\end{equation*}
with 
$\gamma:= \min\left\{ 2,\max\{p_{k} : p_{k} \leq2\}\right\}$, for all unimodular $m$-linear forms $A:\ell_{p_{1}}^{n_1} \times \cdots \times \ell_{p_{d}}^{n_d} \to\mathbb{K}$ and the exponents involved are sharp.
\end{conjecture}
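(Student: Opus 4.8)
My plan is to split the two-sided estimate into an upper and a lower bound, the former being a quick consequence of the material already developed and the latter carrying essentially all of the difficulty.

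For the upper bound I would feed the unimodular form produced by Theorem \ref{KSZ_gen} directly into the quotient. That theorem yields
\[
\|A\| \le (C_d)^{2\left(1-\frac1\gamma\right)}\Bigl(\sum_{k=1}^d n_k\Bigr)^{1-\frac1\gamma}\prod_{k=1}^d n_k^{\max\left(\frac1\gamma-\frac1{p_k},0\right)},
\]
and since $\gamma\in[1,2]$ forces $1-\frac1\gamma\in[0,\frac12]$, subadditivity of the concave power $t\mapsto t^{1-1/\gamma}$ gives $\bigl(\sum_k n_k\bigr)^{1-1/\gamma}\le\sum_k n_k^{1-1/\gamma}$. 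Substituting this bound shows that the infimum in the statement is at most $(C_d)^{2(1-1/\gamma)}$, which I would take as the constant $C_d$ of the conjecture; no summing inequality is needed here.

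For the lower bound my plan is to reproduce the mechanism of Theorem \ref{op_exp}, replacing Littlewood's $(\ell_1,\ell_2)$-inequalities---which realise only the endpoint $p_k=\infty$---by the corresponding family of \emph{anisotropic} Hardy--Littlewood (multiple $(r,1)$-summing) inequalities adapted to $\ell_{p_1}^{n_1}\times\cdots\times\ell_{p_d}^{n_d}$. For each slot $\ell\in\{1,\dots,d\}$ I would use the inequality that places an $\ell_1$-type summation in the $\ell$-th coordinate and the sharp mixed exponents in the remaining ones; evaluated on the all-ones coefficient array of a unimodular form and combined with the universal normalisation $y^{(k)}=n_k^{-1/p_k}x^{(k)}$ of Theorem \ref{op_exp}, this should isolate the factor $n_\ell^{1-1/\gamma}\prod_k n_k^{\max(1/\gamma-1/p_k,0)}$. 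Adding the $d$ resulting estimates and dividing by $d$ would then produce the sum $\sum_k n_k^{1-1/\gamma}$ together with a constant $B_d$ of the form $K_d^{-1}/d$. When every $p_k\ge2$ this is precisely Theorem \ref{op_exp}, and when the $p_k$ take only the endpoint values $1$ and $2$ the required summing inequalities are the classical ones, so the scheme closes in those regimes.

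The main obstacle, and the reason the statement remains conjectural, is confined to the intermediate range $p_k\in(1,2)$: the multiple $(dp/(d+p-1),1)$-summing property of $d$-linear forms on $\ell_p^n\times\cdots\times\ell_p^n$ is presently known only at $p=1$ and $p=2$, and passing to intermediate $p$ would require interpolating multiple summing operators, an operation that is not available in general and is open even in the linear case (see \cite{bennett}). Thus the crux is not the combinatorial bookkeeping of the anisotropic argument but the summing inequality feeding it; granting that inequality with the conjectured exponents, the computation above would go through and, by the sectioning argument of the uniqueness lemma (fixing all but one $n_k$ recovers the one-variable optimal exponent $1-1/p_k$), would simultaneously establish sharpness. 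I would therefore expect any unconditional proof to rest on a new, direct estimate of the summing norms across $(1,2)$ rather than on interpolation.
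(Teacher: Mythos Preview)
The statement is labeled a \emph{conjecture} in the paper and is not proved there; the paper's contribution is the Remark immediately preceding it, which explains why the lower bound is out of reach with current tools. Your analysis is in full agreement with that Remark: the upper bound is an immediate consequence of Theorem~\ref{KSZ_gen} together with the subadditivity of $t\mapsto t^{1-1/\gamma}$ (your argument here is complete and correct), while the lower bound would follow from a multiple $(dp/(d+p-1),1)$-summing estimate for $d$-linear forms on $\ell_p^n\times\cdots\times\ell_p^n$ with $p\in(1,2)$, which is open and cannot presently be obtained by interpolation.

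So there is nothing to compare against a ``paper's proof''; your proposal correctly isolates the proved half, correctly identifies the missing ingredient for the other half, and cites the same obstruction (Bennett~\cite{bennett}) that the paper does.
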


\bigskip

Notice that Theorem \ref{op_exp} provides the following asymptotic growth of 
$\inf \|A\|$ with respect to $n_1,\dots,n_d$: 
\begin{equation*}
\frac{1}{d \cdot 2^{\frac{d-1}{2}}} \leq\inf \frac{\|A\|} {%
\left(\sum_{j=1}^d n_{j}^{1/2}\right) \prod_{j=1}^{d}n_{j}^{\frac{1}{2}-%
\frac{1}{p_{j}}}} \leq 8(d!)^{1-\max\left\{ \frac{1}{2},\frac{1}{p}\right\} }%
\sqrt{\log(1+4d)}, 
\end{equation*}
where $p:=\max\{p_{1},\ldots,p_{d}\}$. Clearly there is a far distance
between the lower and upper estimates. Some interesting problems for further
investigation are presented next.

\begin{problem}
Is there a better lower bound to replace 
\begin{equation*}
d^{-1}2^{\frac{1-d}{2}} ? 
\end{equation*}
\end{problem}

\begin{problem}
Is there a better upper bound to replace 
\begin{equation*}
8(d!)^{1-\max\left\{ \frac{1}{2}, \frac{1}{\max\{p_{1},\ldots,p_{d}\}}\right\} }\sqrt{\log(1+4d)} ? 
\end{equation*}
\end{problem}

\begin{problem}
In the complex case, what about unimodular forms with complex coefficients with modulo $1$, instead of just $1$ and $-1$? It is obvious that \eqref{est.inf.} holds. What about the lower and upper bounds?
\end{problem}

%=============================
%  SECTION 4:  APPLICATIONS
%=============================
\section{Applications} \label{sec-app}

The Kahane--Salem--Zygmund inequality is often used to prove the optimality of exponents in the Hardy--Littlewood inequality. The next results follow in this vein. The fact that the multilinear form provided in Theorem \ref{KSZ_gen} is defined on $\ell_{p_1}^{n_1}\times \cdots \times \ell_{p_d}^{n_d}$ for arbitrary finite dimensions $n_1, \cdots, n_d$ has a crucial role. Throughout this section, $X_{p}$ stands for $\ell_{p}$ if $1\leq p<\infty$ and $X_{\infty}:=c_{0}$.

\subsection{On a result of Santos and Velanga}

From now on, the symbol $e_{j}^{n_{j}}$ stands for $(e_{j},\overset{n_{j}times}{\dots },e_{j})$, with $e_{j}$ the $j$-th canonical vector. The following result, due to Santos and Velanga \cite[Theorem 1.2]{SV}, is a complete classification of the multilinear Bohnenblust--Hille type inequalities.

\begin{theorem}
Let $1 \leq k \leq d$ and $m_1,\dots,m_k$ be positive integers such that $%
m_1+\cdots+m_k = d$. Also let ${\bm{\rho}} := \left(\rho_1, \ldots,
\rho_k\right) \in \left(0,\infty\right)^{k}$. The following assertions are
equivalent:

\begin{itemize}
\item[(I)] There is a constant $B_{k,\bm{\rho}}^{\mathbb{K}} \geq 1$ such
that, for all $d$-linear forms $T: c_0 \times \cdots \times c_0 \to \mathbb{K%
}$, 
\begin{equation*}
\left(\sum_{j_1=1}^{\infty} \left( \cdots \left( \sum_{j_k=1}^{\infty}
\left|T\left(e_{j_1}^{m_1},\dots, e_{j_k}^{m_k}\right)\right|^{\rho_k}
\right)^{\frac{\rho_{k-1}}{\rho_k}}\cdots \right)^{\frac{\rho_1}{\rho_2}}
\right)^{\frac{1}{\rho_1}} \leq B_{k,\bm{\rho}}^{\mathbb{K}} \|T\|.
\end{equation*}

\item[(II)] For all $\mathcal{I }\subset \{1,\dots,k\}$, $\sum_{j\in 
\mathcal{I}} \frac{1}{\rho_j} \leq \frac{\text{card\,}\mathcal{I}+1}{2}. $
\end{itemize}
\end{theorem}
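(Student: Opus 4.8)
The plan is to prove the two implications separately, using the Kahane--Salem--Zygmund inequality of Section~\ref{sec-ksz} for the necessity $(\mathrm{I})\Rightarrow(\mathrm{II})$, and the block-diagonal mixed Littlewood inequalities together with an interpolation argument for the sufficiency $(\mathrm{II})\Rightarrow(\mathrm{I})$. Throughout let $B_i\subseteq\{1,\dots,d\}$ denote the set of variable positions constituting the $i$-th block, $\operatorname{card}B_i=m_i$.

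For the necessity I would fix $\mathcal I\subseteq\{1,\dots,k\}$, write $s:=\operatorname{card}\mathcal I$ and enumerate $\mathcal I=\{i_1<\cdots<i_s\}$. For each $n\ge1$, Proposition~\ref{KSZ} applied with $s$ factors, all exponents equal to $\infty$ and all dimensions equal to $n$, produces signs $\varepsilon_{\mathbf a}$, $\mathbf a\in\{1,\dots,n\}^{s}$, and an $s$-linear form $B$ on $(\ell_\infty^n)^s$ with $\|B\|\le C_s\sqrt{s}\,n^{(s+1)/2}$. I would lift $B$ to a $d$-linear form $T_n$ on $c_0\times\cdots\times c_0$ by letting the blocks indexed by $\mathcal I$ carry the variables of $B$ (the whole block $B_{i_\ell}$ sharing the index $a_\ell$) and evaluating every block outside $\mathcal I$ at the first coordinate:
\[
T_n(x^1,\dots,x^d)=\sum_{\mathbf a\in\{1,\dots,n\}^{s}}\varepsilon_{\mathbf a}\;\prod_{\ell=1}^{s}\Big(\prod_{t\in B_{i_\ell}}x^{t}_{a_\ell}\Big)\cdot\prod_{i\notin\mathcal I}\prod_{t\in B_i}x^{t}_{1}.
\]
Two facts then drive the argument. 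First, since the coordinatewise product of the variables in each block again lies in the unit ball, one has $T_n(x)=w\,B(u^1,\dots,u^s)$ with $|w|\le1$ and each $u^\ell$ in the unit ball of $\ell_\infty^n$, whence $\|T_n\|\le\|B\|$. Second, $T_n(e_{j_1}^{m_1},\dots,e_{j_k}^{m_k})$ equals $\varepsilon_{(j_{i_1},\dots,j_{i_s})}$ when $j_i=1$ for every $i\notin\mathcal I$ and vanishes otherwise, so the indices outside $\mathcal I$ collapse and the iterated norm on the left of $(\mathrm{I})$ is exactly $n^{\sum_{i\in\mathcal I}1/\rho_i}$. Feeding $T_n$ into $(\mathrm{I})$ yields $n^{\sum_{i\in\mathcal I}1/\rho_i}\le B^{\mathbb K}_{k,\bm{\rho}}\,C_s\sqrt s\,n^{(s+1)/2}$ for all $n$, and letting $n\to\infty$ forces $\sum_{i\in\mathcal I}1/\rho_i\le(s+1)/2$, which is precisely $(\mathrm{II})$.

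For the sufficiency I would pass to the coordinates $\xi_i:=1/\rho_i$ and exploit monotonicity: if $(\mathrm{I})$ holds for $\bm{\rho}$ it holds for every $\bm{\rho}'$ with $\rho_i'\ge\rho_i$ for all $i$, because raising an exponent lowers the iterated norm ($\|\cdot\|_{\ell_{r}}\le\|\cdot\|_{\ell_{q}}$ for $q\le r$). Hence it suffices to establish $(\mathrm{I})$ at the minimal admissible vectors, i.e. on the frontier $\{\sum_i\xi_i=(k+1)/2,\ \tfrac12\le\xi_i\le1\}$, which is the $(k-1)$-simplex whose vertices are the $k$ permutations of $(1,\tfrac12,\dots,\tfrac12)$ (one $\rho_i=1$, the others equal to $2$). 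At such a vertex the nested $\ell_2$-norms fuse into a single one and, after using Minkowski's integral inequality to bring the lone $\ell_1$-norm to the outermost slot, the statement reduces to the block-diagonal mixed inequality $\sum_{j_s}\big(\sum_{\text{other }j_i}|T(e_{j_1}^{m_1},\dots,e_{j_k}^{m_k})|^{2}\big)^{1/2}\le C\|T\|$, which I would obtain from Khinchin's inequality applied successively in the block variables, as in the standard proof of the Bohnenblust--Hille inequality. Finally, I would move from the vertices to an arbitrary frontier point by the harmonic interpolation of exponents for iterated mixed norms (a Riesz--Thorin argument, see \cite{BergLofs}): if $(\mathrm{I})$ holds for $\bm{\rho}^{(0)}$ and $\bm{\rho}^{(1)}$ with constants $B_0,B_1$, then it holds for $\bm{\rho}^{(\theta)}$ with $1/\rho^{(\theta)}_i=(1-\theta)/\rho^{(0)}_i+\theta/\rho^{(1)}_i$ and constant $B_0^{1-\theta}B_1^{\theta}$.

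The main obstacle is the sufficiency, and within it the block-diagonal extremal inequality: one must verify that collapsing the $m_i$ variables of a block onto the single diagonal index $j_i$ does not degrade the Khinchin estimates, so that the constant depends on $k$ alone and not on $m_1,\dots,m_k$ — consistent with the fact that $(\mathrm{II})$ is insensitive to the block sizes. The interpolation step also requires care, since one interpolates the iterated mixed-norm sequence spaces rather than ordinary $\ell_p$-spaces. The necessity direction, by contrast, is essentially a single growth-rate comparison driven entirely by Proposition~\ref{KSZ}.
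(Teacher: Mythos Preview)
Your necessity argument $(\mathrm{I})\Rightarrow(\mathrm{II})$ is correct and more direct than the paper's. The paper derives this implication as the special case $\mathbf p=(\infty,\dots,\infty)$ of Theorem~\ref{HL_genblocks}: it first passes through projective tensor products and \cite{AF} to replace the $d$-linear form on $c_0^d$ by a $k$-linear form on $X_{r_1}\times\cdots\times X_{r_k}$, then applies the Kahane--Salem--Zygmund form of Theorem~\ref{KSZ_gen} with all $k$ dimensions active, choosing $n_j=n$ for $j\in\mathcal I$ and $n_j=\lfloor\log n\rfloor$ for $j\notin\mathcal I$ so that the contribution of the irrelevant indices is asymptotically negligible. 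You instead build the test form by hand: apply Proposition~\ref{KSZ} only to the $s=\operatorname{card}\mathcal I$ relevant blocks and freeze the remaining blocks at the first coordinate, so their indices collapse exactly rather than asymptotically. This bypasses both the tensor-product reduction and the $\log n$ device, at the price of constructing a separate $d$-linear lift for each $\mathcal I$; the paper's route, conversely, delivers the full Hardy--Littlewood generalisation (Theorem~\ref{HL_genblocks}) with a single $k$-linear object.

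For $(\mathrm{II})\Rightarrow(\mathrm{I})$ the paper gives no argument at all: the theorem is quoted from \cite{SV}, and only the necessity direction is re-derived here. Your sketch --- monotone reduction to the simplex $\{\sum_i\xi_i=(k+1)/2,\ \xi_i\in[1/2,1]\}$, the block-diagonal mixed $(\ell_1,\ell_2)$ inequality at each vertex via Khinchin, then harmonic interpolation of the iterated mixed norms --- is essentially the strategy of \cite{SV}, and the obstacle you single out, namely that the Khinchin estimates must survive the diagonal restriction inside each block with a constant independent of the $m_i$, is exactly the point that requires work in that reference.
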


The main result of this section shows that (I)$\Rightarrow $(II) is valid in
the general setting of the Hardy--Littlewood inequality, which deals with
both $c_0$ or $\ell_p$ spaces. We show that Theorem \ref{HL_genblocks}
provides a straightforward alternative proof of (I)$\Rightarrow $(II) of
Santos and Velanga.

\begin{theorem}
\label{HL_genblocks} Let $1 \leq k \leq d$ and $m_1,\dots,m_k$ be positive
integers such that $m_1+\cdots+m_k = d$. Also let 
\begin{equation*}
\mathbf{p}:= \left( \mathbf{p}^{1},\dots,\mathbf{p}^{k} \right) \in [1,\, +
\infty]^{d}, \quad \mathbf{p}^{{j}}:=\left(p_1^{j}, \dots,p_{m_j}^{j}\right)
\in (1, \, + \infty]^{m_j},
\end{equation*}
with $j=1,\dots,k$ and $\mathbf{\rho} := \left(\rho_1, \ldots, \rho_k\right)
\in \left(0, + \infty\right)^{k}$. If there is a constant $C_{k,\bm{\rho}, 
\mathbf{p}}^{\mathbb{K}} \geq 1$ such that 
\begin{align}
\left(\sum_{j_1=1}^{\infty} \left( \cdots \left( \sum_{j_k=1}^{\infty}
\left|T\left(e_{j_1}^{m_1},\dots, e_{j_k}^{m_k}\right)\right|^{\rho_k}
\right)^{\frac{\rho_{k-1}}{\rho_k}}\cdots \right)^{\frac{\rho_1}{\rho_2}}
\right)^{\frac{1}{\rho_1}} \leq C_{k,\bm{\rho}, \mathbf{p}}^{\mathbb{K}}
\|T\|,  \label{HL_desgenblocks}
\end{align}
for all $d$-linear forms $T: X_{p_1^{1}} \times \cdots \times
X_{p_{m_1}^{1}} \times \cdots \times X_{p_{1}^{k}} \times \cdots
X_{p_{m_k}^{k}} \to \mathbb{K}$. Then 
\begin{equation*}
\sum_{j\in \mathcal{I}} \frac{1}{\rho_j} \leq \frac{\text{card\,}\mathcal{I}%
+1}{2} - \sum_{j \in \mathcal{I}} \left|\frac{1}{\mathbf{p}^{j}}\right|, 
\end{equation*}
for all $\mathcal{I }\subset \{1,\dots,k\}$.
\end{theorem}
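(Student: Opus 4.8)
The plan is to prove the contrapositive: assuming the inequality \eqref{HL_desgenblocks} holds, I want to show the exponent condition must be satisfied, and the natural way to do this is to plug in the extremal multilinear forms furnished by Theorem \ref{KSZ_gen} and compare the two sides. First I would fix an arbitrary subset $\mathcal{I} \subset \{1,\dots,k\}$ and, for a common dimension $n$, construct a test form $T$ that is unimodular in the blocks indexed by $\mathcal{I}$ and degenerate (supported on a single coordinate) in the blocks outside $\mathcal{I}$. Concretely, for each block $j \in \mathcal{I}$ the form should act like the Kahane--Salem--Zygmund form on $\ell_{p_1^j}^{n} \times \cdots \times \ell_{p_{m_j}^j}^{n}$, while for $j \notin \mathcal{I}$ it should pick out the first canonical coordinate in each of the $m_j$ slots. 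This way, evaluating at canonical basis vectors $e_{j_i}^{m_i}$ makes the left-hand sum of \eqref{HL_desgenblocks} effectively a sum only over the indices in $\mathcal{I}$, each ranging over $\{1,\dots,n\}$.

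Next I would estimate both sides as functions of $n$. On the left-hand side, since the coefficients $|T(e_{j_1}^{m_1},\dots,e_{j_k}^{m_k})|$ are equal to $1$ for the $n^{\operatorname{card}\mathcal{I}}$ admissible multi-indices and zero otherwise, the nested mixed-norm collapses to $n^{\sum_{j\in\mathcal{I}} 1/\rho_j}$. On the right-hand side, the norm $\|T\|$ is controlled by the Kahane--Salem--Zygmund bound from Theorem \ref{KSZ_gen} applied to the active blocks: since all relevant $p_i^j \geq 1$ and the product over $j \in \mathcal{I}$ of the factors $\bigl(\sum_k n_k\bigr)^{1-1/\gamma}\prod_k n_k^{\max(1/\gamma - 1/p_k,0)}$ specializes, with all $n_k = n$, to a power of $n$ whose exponent is $\frac{\operatorname{card}\mathcal{I}+1}{2} - \sum_{j\in\mathcal{I}} \bigl|\frac{1}{\mathbf{p}^j}\bigr|$ in the admissible range $p_i^j \geq 2$ (and more generally with $\gamma$ as defined). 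Feeding these two power-of-$n$ estimates into \eqref{HL_desgenblocks} and letting $n \to \infty$ forces the exponent on the left to be dominated by the exponent on the right, which is exactly the claimed inequality
\[
\sum_{j\in \mathcal{I}} \frac{1}{\rho_j} \leq \frac{\operatorname{card}\mathcal{I}+1}{2} - \sum_{j \in \mathcal{I}} \left|\frac{1}{\mathbf{p}^{j}}\right|.
\]

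The step I expect to be the main obstacle is the bookkeeping in assembling the test form and verifying that its norm genuinely matches the Kahane--Salem--Zygmund exponent restricted to the blocks in $\mathcal{I}$, rather than picking up spurious contributions from the degenerate blocks. I would need to check carefully that embedding the single-coordinate factors for $j \notin \mathcal{I}$ does not inflate the operator norm and that the tensor-product structure lets the norm factor as a product over blocks; the isometric and restriction arguments already used in the proof of Theorem \ref{KSZ_gen} (restricting the domain spaces and using unit-ball inclusions) are the right tools here. A secondary subtlety is handling the boundary cases $p_i^j = \infty$ (where $X_{p}$ becomes $c_0$) and ensuring the exponent $\max(1/\gamma - 1/p_i^j, 0)$ combines correctly into the term $|\frac{1}{\mathbf{p}^j}|$; restricting attention to the regime where the Kahane--Salem--Zygmund exponents are known to be sharp keeps this manageable, and the remaining cases follow by the same dimension-counting comparison.
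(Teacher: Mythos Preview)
Your exponent computation for the right-hand side is wrong, and this is a genuine gap rather than a bookkeeping issue. If you build $T$ as a tensor product of Kahane--Salem--Zygmund forms on each active block $j\in\mathcal{I}$ (which is what ``for each block $j\in\mathcal{I}$ the form should act like the KSZ form on $\ell_{p_1^j}^n\times\cdots\times\ell_{p_{m_j}^j}^n$'' says), then with all $n_k=n$ and all $p_i^j\ge 2$ the norm of the $j$-th factor is of order $n^{(m_j+1)/2-|1/\mathbf{p}^j|}$, and the product over $j\in\mathcal{I}$ gives an exponent
\[
\sum_{j\in\mathcal{I}}\frac{m_j+1}{2}-\sum_{j\in\mathcal{I}}\left|\frac{1}{\mathbf{p}^j}\right|
=\frac{\operatorname{card}\mathcal{I}+M}{2}-\sum_{j\in\mathcal{I}}\left|\frac{1}{\mathbf{p}^j}\right|,
\qquad M:=\sum_{j\in\mathcal{I}}m_j,
\]
not $\tfrac{\operatorname{card}\mathcal{I}+1}{2}-\sum_{j\in\mathcal{I}}|1/\mathbf{p}^j|$. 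Since $M\ge\operatorname{card}\mathcal{I}$ with strict inequality whenever any active block has size $m_j>1$, your comparison of exponents yields only the weaker conclusion $\sum_{j\in\mathcal{I}}\tfrac{1}{\rho_j}\le\tfrac{\operatorname{card}\mathcal{I}+M}{2}-\sum_{j\in\mathcal{I}}|1/\mathbf{p}^j|$. Using a single KSZ form across all $M$ active slots instead of a product does not help either: that gives exponent $\tfrac{M+1}{2}-\sum|1/\mathbf{p}^j|$, again too large.

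The paper avoids this by first \emph{collapsing each block to a single slot}: via the projective tensor product and a structural result of Arias--Farmer, the $d$-linear inequality \eqref{HL_desgenblocks} implies the corresponding inequality for $k$-linear forms on $X_{r_1}\times\cdots\times X_{r_k}$ with $\tfrac{1}{r_j}=|1/\mathbf{p}^j|$. Only then is Theorem~\ref{KSZ_gen} applied, now at the $k$-linear level, so that the $(\sum n_j)^{1/2}$ factor contributes a single $\tfrac12$ rather than $\tfrac{\operatorname{card}\mathcal{I}}{2}$ or $\tfrac{M}{2}$; the paper then separates $\mathcal{I}$ from its complement by taking $n_j=n$ for $j\in\mathcal{I}$ and $n_j=\lfloor\log n\rfloor$ otherwise. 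Your approach can be repaired in this spirit without tensor products: instead of a full unimodular $d$-linear form, take the ``diagonal'' form
\[
T(x^{1},\dots,x^{d})=\sum_{i_1,\dots,i_k}\varepsilon_{i_1,\dots,i_k}\Bigl(\prod_{s=1}^{m_1}x^{s}_{i_1}\Bigr)\cdots\Bigl(\prod_{s}x^{s}_{i_k}\Bigr),
\]
whose $d$-linear norm is bounded, by H\"older's inequality on each block, by the $k$-linear norm of the associated KSZ form on $\ell_{r_1}^{n_1}\times\cdots\times\ell_{r_k}^{n_k}$. That is the missing idea.
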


\begin{proof}
Let us suppose \eqref{HL_desgenblocks} holds. Since $\mathcal{L}\left(%
\widehat{\otimes}_{1\leq j \leq m_1}^{\pi} X_{p_j^{1}}, \cdots, \widehat{%
\otimes}_{1\leq j \leq m_k}^{\pi} X_{p_j^{k}}; \mathbb{K}\right)$ and $%
\mathcal{L}\left(X_1, \cdots, X_d; \mathbb{K}\right)$ are isometric, we get
an operator 
\begin{equation*}
\widehat{T}: \widehat{\otimes}_{1\leq j \leq m_1}^{\pi} X_{p_j^{1}}\times
\cdots \times \widehat{\otimes}_{1\leq j \leq m_k}^{\pi} X_{p_j^{k}}
\rightarrow\mathbb{K}, 
\end{equation*}
that satisfies 
\begin{equation*}
\widehat{T} \left( \otimes_{1\leq j \leq m_1} x_{j}^{1}, \cdots,
\otimes_{1\leq j \leq m_k} x_{j}^{k}\right) = T\left(x_1^{1},
\cdots,x_{m_1}^{1}, \cdots, x_1^{k}, \cdots,x_{m_k}^{k} \right). 
\end{equation*}
Then, 
\begin{equation*}
\widehat{T} \left( \otimes_{m_1} e_{j_1}, \cdots, \otimes_{m_k}
e_{j_{k}}\right) = T\left(e_{j_1}^{m_1}, \cdots,e_{j_k}^{m_k} \right) 
\end{equation*}
and $\|\widehat{T}\|=\|T\|$. Using \cite[Theorem 1.3]{AF} it is possible to
prove that it is sufficient to deal with $k$-linear forms $A: X_{r_1} \times
\cdots \times X_{r_k} \to \mathbb{K}$, with $\frac{1}{r_j} = \left|\frac{1}{%
\mathbf{p}^{j}}\right|,\, j=1,\dots,k$, defined by 
\begin{equation*}
A\left(y^1, \cdots, y^k\right):= \widehat{T}\left(u_1(y^1), \cdots,
u_k(y^k)\right), 
\end{equation*}
which is bounded and fulfills $\|A\| \leq \|\widehat{T}\|$. Therefore, 
\begin{equation*}
\left( \sum_{j_1=1}^{n_1} \left( \cdots \left( \sum_{j_d=1}^{n_k} \left|
A(e_{j_1},\dots,e_{j_k}) \right|^{\rho_k} \right)^\frac{\rho_{k-1}}{\rho_k}
\cdots \right)^\frac{\rho_1}{\rho_2} \right)^\frac1{\rho_1} \leq C_{k,%
\bm{\rho}, \mathbf{p}}^{\mathbb{K}} \|A\|,
\end{equation*}
for all positive integers $n_1,\dots,n_k$ and all $k$-linear forms $A:
X_{r_1} \times \cdots \times X_{r_k} \to \mathbb{K}$, with $\frac{1}{r_j} =
\left|\frac{1}{\mathbf{p}^{j}}\right|,\, j=1,\dots,k. $ Using the map from
the Theorem \ref{KSZ_gen}, we conclude that, for all positive integers $%
n_1,\dots,n_k$, 
\begin{equation*}
n_1^{\frac1{\rho_1}} \cdots n_k^{\frac1{\rho_k}} \leq C_d \cdot
n_1^{\frac12- \frac1{r_1}} \cdots n_k^{\frac12 - \frac1{r_k}} \cdot
\left(n_1 + \cdots + n_k \right)^\frac12.
\end{equation*}
Therefore,
\begin{equation*}
\sum_{j \in \mathcal{I}}\frac1{\rho_j} \leq \frac{\text{card\,}\mathcal{I}+1%
}{2} - \sum_{j \in \mathcal{I}} \left|\frac{1}{\mathbf{p}^{j}}\right|.
\end{equation*}
\end{proof}

\subsection{Dimension blow-up estimates}

We turn our attention to the order in which the estimates blow-up with respect to the dimension when the exponents $(\rho _{1},\dots,\rho _{d})$ do not fulfill the range required in Hardy--Littlewood inequality (see \cite{ABPS_HL}). In this framework, there is no constant $C_{d,\bm{\rho}, \mathbf{p}}^{\mathbb{K}} \geq 1$, independent of the dimensions $n_1,\dots,n_d$, for which the inequality 
\begin{align*}
\left(\sum_{j_1=1}^{n_1} \left( \cdots \left( \sum_{j_d=1}^{n_d}
\left|T\left(e_{j_1},\dots, e_{j_d}\right)\right|^{\rho_d} \right)^{\frac{%
\rho_{d-1}}{\rho_d}}\cdots \right)^{\frac{\rho_1}{\rho_2}} \right)^{\frac{1}{%
\rho_1}} \leq C_{d,\bm{\rho}, \mathbf{p}}^{\mathbb{K}} \|T\|,
\end{align*}
is universally complied for all $d$-linear forms $T: X_{p_1} \times \cdots \times X_{p_{d}} \to \mathbb{K}$. Recent papers present results on this line \cite{ANNPR_POS,PSST}. The next one follows this fashion.

\begin{theorem}{\cite[Theorem 5]{ANNPR_POS}}
Let $d\geq 2$ be an integer. If $\left|\frac{1}{\mathbf{p}} \right|\leq \frac{1}{2}$ and $\mathbf{r} \in [1,2]^d$, then there is a constant $C_{d,\mathbf{r},\mathbf{p}}^{\mathbb{K}}\geq 1$ (not depending on $n$) such that 
\begin{multline*}
\left( \sum_{i_{1}=1}^{n} \left( \cdots \left( \sum_{i_{d}=1}^{n} \left\vert
T(e_{i_{1}}, \dots,e_{i_{d}}) \right\vert^{r_{d}} \right)^{\frac{r_{d-1}}{%
r_{d}}}\cdots \right)^{\frac{r_{1}}{r_{2}}} \right)^{\frac{1}{r_{1}}} \\
\leq C_{d, \mathbf{r}, \mathbf{p}}^{\mathbb{K}} \cdot n^{\max \left\{ \left| 
\frac{1}{\mathbf{r}} \right| -\frac{d+1}{2} + \left| \frac{1}{\mathbf{p}}
\right|, 0\right\}} \Vert T\Vert,
\end{multline*}
for all $d$-linear forms $T:\ell _{p_1}^{n}\times \cdots \times
\ell_{p_d}^{n} \to \mathbb{K}$ and all positive integer $n$. Moreover, the
exponent $\max \left\{ \left| \frac{1}{\mathbf{r}} \right| -\frac{d+1}{2}%
+\left| \frac{1}{\mathbf{p}} \right|, 0\right\} $ is optimal.
\end{theorem}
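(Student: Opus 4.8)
The plan is to prove the upper estimate by reducing to the \emph{admissible} anisotropic Hardy--Littlewood inequality and then paying for the non-admissible exponents with an elementary finite-dimensional inclusion, and to prove optimality by feeding two explicit test maps into the inequality: the unimodular form of Theorem~\ref{KSZ_gen} and a single monomial. The first observation I would record is that the hypothesis $\left|\frac{1}{\mathbf{p}}\right|\le\frac12$ forces each $\frac{1}{p_j}\le\left|\frac{1}{\mathbf{p}}\right|\le\frac12$, so that \emph{every} $p_j\ge2$; this makes the parameter $\gamma$ of Theorem~\ref{KSZ_gen} equal to $2$ and collapses the Kahane--Salem--Zygmund exponent (with $n_1=\cdots=n_d=n$) to the clean value $\frac{d+1}{2}-\left|\frac{1}{\mathbf{p}}\right|$.

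For the upper bound I would first note the mixed-norm inclusion: writing $\Vert a\Vert_{\mathbf r}$ for the iterated norm on the left-hand side of the target inequality, if $r_j\le s_j$ for every $j$ then, applying the one-dimensional bound $\Vert x\Vert_{r_j}\le n^{1/r_j-1/s_j}\Vert x\Vert_{s_j}$ on $\mathbb{K}^n$ from the innermost sum outward, one gets
\[
\Vert a\Vert_{\mathbf r}\ \le\ n^{\left|\frac{1}{\mathbf r}\right|-\left|\frac{1}{\mathbf s}\right|}\,\Vert a\Vert_{\mathbf s}.
\]
Given $\mathbf r\in[1,2]^d$, I would then choose $\mathbf s\in[1,2]^d$ with $r_j\le s_j$ for all $j$ and $\left|\frac{1}{\mathbf s}\right|=\min\left\{\left|\frac{1}{\mathbf r}\right|,\ \frac{d+1}{2}-\left|\frac{1}{\mathbf p}\right|\right\}$. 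Such $\mathbf s$ exists because each $\frac1{s_j}$ may be taken in $[\frac12,\frac1{r_j}]$, so $\left|\frac{1}{\mathbf s}\right|$ ranges over $[\frac d2,\left|\frac{1}{\mathbf r}\right|]$, and the target value lies in this interval: it is $\le\left|\frac{1}{\mathbf r}\right|$ by construction and $\ge\frac d2$ since $\left|\frac{1}{\mathbf p}\right|\le\frac12$ gives $\frac{d+1}{2}-\left|\frac{1}{\mathbf p}\right|\ge\frac d2$ while $\left|\frac{1}{\mathbf r}\right|\ge\frac d2$. For such $\mathbf s\in[1,2]^d$ the full-index inequality is the binding one among the conditions of Theorem~\ref{HL_genblocks}: adjoining an index $j$ raises the left side by $\frac1{s_j}\ge\frac12$ and the right side by $\frac12-\frac1{p_j}\le\frac12$, so the gap is non-increasing in the index set and $\left|\frac{1}{\mathbf s}\right|\le\frac{d+1}{2}-\left|\frac{1}{\mathbf p}\right|$ already guarantees admissibility. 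Hence the anisotropic Hardy--Littlewood inequality \cite{ABPS_HL} gives $\Vert a\Vert_{\mathbf s}\le C\Vert T\Vert$ with $C$ independent of $n$, and the inclusion above supplies the factor $n^{\left|\frac{1}{\mathbf r}\right|-\left|\frac{1}{\mathbf s}\right|}=n^{\max\{\left|\frac{1}{\mathbf r}\right|-\frac{d+1}{2}+\left|\frac{1}{\mathbf p}\right|,\,0\}}$, exactly the claimed exponent.

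For optimality I would suppose the inequality holds with $n^{\alpha}$ in place of the stated power and test two maps. Plugging in the unimodular form $A$ of Theorem~\ref{KSZ_gen} with $n_1=\cdots=n_d=n$: since all its coefficients have modulus $1$, the left side equals $n^{\left|\frac{1}{\mathbf r}\right|}$, while $\gamma=2$ yields $\Vert A\Vert\le C_d\,n^{\frac{d+1}{2}-\left|\frac{1}{\mathbf p}\right|}$; letting $n\to\infty$ forces $\alpha\ge\left|\frac{1}{\mathbf r}\right|-\frac{d+1}{2}+\left|\frac{1}{\mathbf p}\right|$. Testing instead the single monomial $T(x^1,\dots,x^d)=x^1_1\cdots x^d_1$, whose coefficient array has one nonzero entry and whose norm is $1$, gives $1\le C n^{\alpha}$ and hence $\alpha\ge0$. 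Together these force $\alpha\ge\max\{\left|\frac{1}{\mathbf r}\right|-\frac{d+1}{2}+\left|\frac{1}{\mathbf p}\right|,\,0\}$, matching the upper bound.

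The only genuinely delicate step I anticipate is the simultaneous selection of the auxiliary exponents $\mathbf s$: they must dominate $\mathbf r$ coordinatewise, stay inside $[1,2]^d$, and lie on the boundary of the admissible region — and the existence of such $\mathbf s$ is precisely what the two hypotheses $\left|\frac{1}{\mathbf p}\right|\le\frac12$ and $\mathbf r\in[1,2]^d$ are there to ensure. Everything else is either a direct appeal to Theorem~\ref{KSZ_gen} or to the known admissible inequality; in particular, the monotonicity-of-the-gap argument is what lets me replace the $2^d$ subset conditions of Theorem~\ref{HL_genblocks} by the single full-index one, keeping the verification of admissibility short.
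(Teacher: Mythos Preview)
The paper does not prove this theorem; it is quoted from \cite[Theorem~5]{ANNPR_POS} solely as motivation for the paper's own generalization that immediately follows. Your argument is correct and is essentially the one in \cite{ANNPR_POS}. On the optimality half, feeding the unimodular form of Theorem~\ref{KSZ_gen} (with $n_1=\cdots=n_d=n$, so $\gamma=2$) into the inequality is precisely the mechanism the paper itself employs in the proof of the next theorem---the version with distinct $n_1,\dots,n_d$---so there you are aligned with the paper's method exactly; the monomial test for $\alpha\ge0$ is the obvious complement. For the upper bound, your reduction---enlarge $\mathbf r$ coordinatewise to an admissible $\mathbf s\in[1,2]^d$ with $\left|\frac{1}{\mathbf s}\right|=\min\bigl\{\left|\frac{1}{\mathbf r}\right|,\frac{d+1}{2}-\left|\frac{1}{\mathbf p}\right|\bigr\}$, invoke the admissible anisotropic Hardy--Littlewood inequality, and pay the factor $n^{\left|\frac{1}{\mathbf r}\right|-\left|\frac{1}{\mathbf s}\right|}$ via the finite-dimensional inclusion---is the standard route and is exactly how \cite{ANNPR_POS} proceeds (their title advertises the H\"older inequality for mixed sums, which is your inclusion step).

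One small remark: your detour through the subset conditions of Theorem~\ref{HL_genblocks} is harmless but unnecessary, since that theorem supplies only the \emph{necessary} direction. The sufficient condition you actually invoke from \cite{ABPS_HL} is just the single sum constraint $\left|\frac{1}{\mathbf s}\right|\le\frac{d+1}{2}-\left|\frac{1}{\mathbf p}\right|$ together with $\mathbf s\in[1,2]^d$, both of which your construction secures directly; the monotonicity-of-the-gap computation, while correct, is not needed.
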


Our goal is to improve this result, showing the dependence arisen in distinct dimensions $n_1,\dots,n_d$. The result obtained reads as follows.

\begin{proposition}
Let $d \geq 2$ be an integer, $\mathbf{p}:= (p_1, \ldots, p_d) \in [1, \, +
\infty]^{d}$ be such that $\left|\frac{1}{\mathbf{p}}\right| \leq \frac{1}{2}
$ and also let $s_{k}, \rho_k \in \left(0,\infty\right)$, for $k=1,\dots,d$.
Suppose that there exists $D_{d,\bm{\rho}, \mathbf{p}}^{\mathbb{K}} \geq 1$
such that 
\begin{equation*}
\left(\sum_{j_1=1}^{n_1} \left( \ldots \left(\sum_{j_d=1}^{n_d}
\left|T\left(e_{j_1},\dots, e_{j_d}\right)\right|^{\rho_d} \right)^{\frac{%
\rho_{d-1}}{\rho_d}}\dots \right)^{\frac{\rho_1}{\rho_2}} \right)^{\frac{1}{%
\rho_1}} \leq D_{d,\bm{\rho}, \mathbf{p}}^{\mathbb{K}} n_1^{s_1} \cdots
n_d^{s_d} \|T\|,
\end{equation*}
for all bounded $d$-linear operators $T: \ell_{p_1}^{n_1} \times \cdots
\times \ell_{p_d}^{n_d} \to \mathbb{K}$ and any positive integers $%
n_1,\ldots, n_d$. Then for all $\mathcal{I }\subset \{1,\dots,d\}$, 
\begin{equation*}
\sum_{j\in \mathcal{I}} s_j \geq \max \left\{0, \sum_{j\in \mathcal{I}} 
\frac{1}{\rho_j} - \frac{\text{card\,}\mathcal{I}+1}{2}+\sum_{j\in \mathcal{I%
}} \frac{1}{p_j}\right\}.
\end{equation*}
\end{proposition}

\begin{proof}
Let us consider the $d$-linear form $A: \ell_{p_1}^{n_1} \times \cdots\times
\ell_{p_d}^{n_d} \to \mathbb{K}$ given by Theorem \ref{KSZ_gen}. Since, $%
\left|A\left(e_{j_1}, \ldots, e_{j_d}\right)\right| = 1$ for any $%
j_1,\ldots, j_d$, we have 
\begin{equation*}
\left(\sum_{j_1=1}^{n_1} \left( \ldots \left(\sum_{j_d=1}^{n_d}
\left|A\left(e_{j_1},\dots, e_{j_d}\right)\right|^{\rho_d} \right)^{\frac{%
\rho_{d-1}}{\rho_d}}\dots \right)^{\frac{\rho_1}{\rho_2}} \right)^{\frac{1}{%
\rho_1}} = n_1^{\frac{1}{\rho_1}} \cdots n_d^{\frac{1}{\rho_d}}.
\end{equation*}
By Theorem \ref{KSZ_gen}, 
\begin{align*}
n_1^{\frac{1}{\rho_1}} \cdots n_d^{\frac{1}{\rho_d}} &\leq D_{d,\bm{\rho},%
\mathbf{p}}^{\mathbb{K}} n_1^{s_1} \cdots n_d^{s_d} \|A\| \\
&\leq D_{d,\bm{\rho}, \mathbf{p}}^{\mathbb{K}} n_1^{s_1 + \frac12 -
\frac1{p_1}} \cdots n_d^{s_d + \frac12 - \frac1{p_d}} \left(n_1 + \cdots +
n_d\right)^\frac12
\end{align*}
and so 
\begin{equation*}
\frac{n_1^{\frac{1}{\rho_1} - s_1 + \frac{1}{p_1} - \frac{1}{2}}\cdots n_d^{%
\frac{1}{\rho_d}- s_d + \frac{1}{p_d} - \frac{1}{2}}}{\left(n_1 + \cdots +
n_d\right)^{\frac{1}{2}}} \leq D_{d,\bm{\rho}, \mathbf{p}}^{\mathbb{K}}.
\end{equation*}
Therefore, 
\begin{equation*}
\sum_{j\in \mathcal{I}} \frac{1}{\rho_j} - \sum_{j\in \mathcal{I}} s_{j} - 
\frac{\text{card\,}\mathcal{I}+1}{2}+ \sum_{j\in \mathcal{I}} \frac{1}{p_j}
\leq 0.
\end{equation*}
\end{proof}

\bigskip

\noindent\textbf{Acknowledgments} The authors thank the anonymous referee, whose reading, insightful and important suggestions were crucial to improve and clarify the presentation of the paper and to minimize imprecisions of the original version.

\end{document}